\documentclass[11pt]{article}
\usepackage [dvips]{graphics}
\usepackage[centertags]{amsmath}
\usepackage{amsfonts}
\usepackage{amssymb}
\usepackage{amsthm}
\usepackage{newlfont}
\usepackage{stmaryrd}
\usepackage{color,epsfig}
\usepackage{amsfonts,graphicx,psfrag}
\usepackage{graphicx}
\usepackage{float}
\usepackage{amsmath}

\usepackage{url}

\usepackage{txfonts}
\usepackage{subcaption}
\usepackage{mathrsfs}
\usepackage{verbatim}

\theoremstyle{plain}
\numberwithin{equation}{section}
\newtheorem{thm}{Theorem}[section]
\newtheorem{cor}[thm]{Corollary}
\newtheorem{lem}[thm]{Lemma}

\newtheorem{rem}[thm]{Remark}

\def\sqr#1#2{{\vcenter{\vbox{\hrule height.#2pt
				\hbox{\vrule width.#2pt height#1pt \kern#1pt \vrule
					width.#2pt}
				\hrule height.#2pt}}}}

\def\<{{\langle}}
\def\>{{\rangle}}
\def\no{\noindent}

\def\bs{\bigskip}

\def\bea{\begin{eqnarray}}
\def\eea{\end{eqnarray}}
\def\bes{\begin{eqnarray*}}
\def\ees{\end{eqnarray*}}

\makeatletter

\@addtoreset{equation}{section}
\makeatother

\begin{document}
%	\begin{frontmatter}
		\title{\bf Three elliptic closed characteristics on the non-degenerate compact convex hypersurfaces in $\mathbb{R}^{6}$ }
		\author{Lu Liu \thanks{E-mail:202211806@mail.sdu.edu.cn.}
			\quad Yuwei Ou \thanks{E-mail:ywou@sdu.edu.cn.}
			\\ \\
			Department of Mathematics, Shandong University Jinan, Shandong 250100 \\
			The People's Republic of China
		}
		\date{}
		\maketitle
				
		\begin{abstract}
	Let $\Sigma\subset \mathbb{R}^{2n}$ with $n\geq2$ be any $C^2$ compact convex hypersurface. The stability of closed characteristics has attracted considerable attention in related research fields. A long-standing conjecture states that all closed characteristics are irrationally elliptic, provided $\Sigma$ possesses only finitely geometrically distinct closed characteristics. This conjecture has been fully resolved only in $\mathbb{R}^4$, while it remains completely open in higher dimensions. Even in $\mathbb{R}^6$, it is unknown whether there exist three elliptic closed characteristics. In this paper, we first prove that for any $\Sigma\subset \mathbb{R}^{2n}$ with finitely many closed characteristics, there exist at least two elliptic closed characteristics, which possess a nice symplectic normal form. In particular, as a simple corollary, they are irrational elliptic when $\Sigma$  is non-degenerate. Moreover, for any non-degenerate $\Sigma\subset\mathbb{R}^{6}$ with finitely many closed characteristics, we obtain at least three elliptic characteristics, of which at least two are irrationally elliptic. Based on the $n$-or-$\infty$ conjecture, three elliptic closed characteristics are optimal.
This result provide theoretical support for further research on this conjecture.

		\end{abstract}		
	\bs	
	
	\no{\bf AMS Subject Classification:} 58E05, 53D12,   34C25
		
	\bs	
	
	\no{\bf Key Words:}  Compact convex hypersurfaces, Closed characteristics, Stability, Maslov-type index, Hamiltonian systems		
%	\end{frontmatter}		
		%% Use \section commands to start a section
		\section{Introduction and main results}
		Let \(\Sigma\) be a $C^2$ compact hypersurface in \(\mathbb{R}^{2n}\) that is strictly convex with respect to the origin, i.e., the tangent hyperplane of \(\Sigma\) at any \(x \in \Sigma\) does not pass through the origin. Without loss of generality, we suppose it contains the origin. We denote the set of all such hypersurfaces in \(\mathbb{R}^{2n}\) by \(\mathcal{H}(2n)\).
		
		For \(x \in \Sigma\), let \(N_{\Sigma}(x)\) be the outward unit normal vector of \(\Sigma\) at \(x\), satisfying \(N_{\Sigma}(x) \cdot x = 1\).  We consider closed characteristics $(\tau, x)$ on $\Sigma$, which are solutions of the following problem
	\begin{equation}
		\begin{cases}
			\dot{x}(t) = JN_{\Sigma}(x(t)),\ x(t) \in \Sigma,\ \forall t \in \mathbb{R}, \\
			x(\tau) = x(0),
		\end{cases}
		\label{system 1}
	\end{equation}
		where \(J = \begin{pmatrix} 0 & -I_n \\ I_n & 0 \end{pmatrix}\) is the standard symplectic matrix and \(\tau > 0\). A closed characteristic \((\tau, x )\) is prime with \(\tau\) being the minimal period. Two closed characteristics \((\tau, x)\) and \((\sigma, y)\) are called geometrically distinct if \(x(\mathbb{R}) \neq y(\mathbb{R})\). The set of all geometrically distinct closed characteristics is denoted by \(\widetilde{\mathcal{J}}(\Sigma)\), and let \([(\tau, x)]\) be the equivalence class of closed characteristics geometrically identical to \((\tau, x)\).
			
		Let \( j : \mathbb{R}^{2n} \to \mathbb{R} \) be the gauge function of \( \Sigma \), i.e., \( j(\lambda x) = \lambda \) for \( x \in \Sigma \) and \( \lambda \geq 0 \), then \( j \in C^2(\mathbb{R}^{2n} \setminus \{0\}, \mathbb{R}) \cap C^1(\mathbb{R}^{2n}, \mathbb{R}) \) and \( \Sigma = j^{-1}(1) \). Fix a constant \( \alpha \in (1,2) \) and define the Hamiltonian function \( H_{\alpha} : \mathbb{R}^{2n} \to [0,+\infty] \) by
		\begin{equation*}
			H_{\alpha}(x) = j(x)^{\alpha}, \quad \forall x \in \mathbb{R}^{2n}.
		\end{equation*}
		Then \( H_{\alpha} \in C^2(\mathbb{R}^{2n} \setminus \{0\}, \mathbb{R}) \cap C^1(\mathbb{R}^{2n}, \mathbb{R}) \) is convex and \( \Sigma = H_{\alpha}^{-1}(1) \). It is well known that the problem \eqref{system 1} is equivalent to the following given energy problem of the Hamiltonian system
		\begin{equation}
			\begin{cases}
				\dot{x}(t) = J H_{\alpha}'(x(t)),\ H_{\alpha}(x(t)) = 1, & \forall t \in \mathbb{R}, \\
				x(\tau) = x(0).
			\end{cases}
			\label{system 2}
		\end{equation}
		Denote by \( \widetilde{\mathcal{J}}(\Sigma, \alpha) \) the set of all geometrically distinct solutions \( (\tau, x) \) of \eqref{system 2} where \( \tau \) is the minimal period of \( x \). Note that elements in \( \widetilde{\mathcal{J}}(\Sigma) \) and \( \widetilde{\mathcal{J}}(\Sigma, \alpha) \) are one to one correspondent to each other.

		Let \( (\tau, x) \in \widetilde{\mathcal{J}}(\Sigma, \alpha) \), the fundamental solution \( \gamma_x : [0,\tau] \to \text{Sp}(2n) \) with \( \gamma_x(0) = I_{2n} \) of the linearized Hamiltonian system
		\begin{equation}
			\dot{y}(t) = J H_{\alpha}''(x(t)) y(t), \quad \forall t \in \mathbb{R}
			\label{linear system 3}
		\end{equation}
		is called the associated symplectic path of \( (\tau, x) \). The eigenvalues of \( \gamma_x(\tau) \) are called Floquet multipliers, and
denote the elliptic height $e(\gamma_x(\tau))$ of $\gamma_x(\tau)$ to be the total algebraic multiplicity of all eigenvalues of $\gamma_x(\tau)$ on the unit circle
$\mathbb{U}=\{z \in \mathbb{C}\, |\, |z|=1 \}$. As usual, a closed characteristic $(\tau, x)$ is non-degenerate, if $1 \in \sigma(x)$ is of algebraic multiplicity $2$; a hypersurface $\Sigma$ is called non-degenerate, if all the closed characteristics on the given hypersurface together with all of their iterations are non-degenerate as periodic
solutions of the corresponding Hamiltonian system. A closed characteristic $(\tau, x)$ is called elliptic, if $e(\gamma_x(\tau))=2n$; hyperbolic if $1$ is a double Floquet multiplier of it and $e(\gamma_x(\tau))=2$; irrationally elliptic if in an appropriate coordinate $\gamma_x(\tau)$ is the $\diamond$-product of one $N_{1}(1,1)=\left(
                                                                                       \begin{array}{cc}
                                                                                         1 & 1 \\
                                                                                         0 & 1 \\
                                                                                       \end{array}
                                                                                     \right)
$
and $n-1$ rotation matrices $\left(
                               \begin{array}{cc}
                                 \cos\theta_{i} & -\sin\theta_{i} \\
                                 \sin\theta_{i} & \cos\theta_{i} \\
                               \end{array}
                             \right)
$
with rotation angles $\theta_{i}$ being irrational multiples of $\pi$ for $1\leq i\leq n-1$. Hence, the stability problem refers to estimating the number of elliptic closed characteristics that may exist for system \eqref{linear system 3}.
	
		%%文献综述
		Research on the stability of closed characteristics dates back to 1986, I. Ekeland \cite{eke1986} proved that if \(\Sigma \in \mathcal{H}(2n)\) is a \(\sqrt{2}\)-pinched compact convex hypersurface, then it admits at least one elliptic closed characteristic. Later, he further made following conjectured in \cite{eke1990}:

\vskip 0.2 cm
\textbf{Conjecture 1}. For any \(\Sigma \in \mathcal{H}(2n)\), there exists at least one elliptic closed characteristic.
\vskip 0.2 cm
In \cite{dde1992}, Dell'Antonio-D'Onofrio-Ekeland verified the above conjecture by imposing the symmetry condition \(\Sigma = -\Sigma\). In 1998, Long \cite{lon1998} showed that for any \(\Sigma \in \mathcal{H}(2n)\), either there exist infinitely many geometrically distinct hyperbolic closed characteristics \([(\tau_j, x_j)]\) with minimal periods \(\tau_j \to +\infty\), or there exists at least one non-hyperbolic closed characteristic. In 2002, Long and Zhu \cite{lonzhu2002} refined this result: If \(^{\#}\widetilde{\mathcal{J}}(\Sigma) < +\infty\), then there has at least one elliptic closed characteristic on \(\Sigma\), and at least \( [ \frac{n}{2} ] \) geometrically distinct closed characteristics possess irrational mean indices, where $^{\#}$A denotes the total number of elements in a set A. They further proved that if \(^{\#}\widetilde{\mathcal{J}}(\Sigma) \leq 2\rho_n(\Sigma) - 2 < +\infty\), then \(\Sigma\) admits at least two elliptic closed characteristics. Also, in 2000, Long \cite{lon2000} proved that for \(n = 2\), if \(^{\#}\widetilde{\mathcal{J}}(\Sigma) = 2\), then both closed characteristics are elliptic. Later, in 2007, \cite{wanghulong2007} strengthened this conclusion by showing that they are in fact irrational elliptic closed characteristics. Based on these results, when \(^{\#}\widetilde{\mathcal{J}}(\Sigma)<+\infty\), it is natural to guess that we should have a stronger result than the conclusion of Conjecture 1. Therefore, in \cite{wanghulong2007}, the authors further made the following conjecture:
\vskip 0.2 cm
\textbf{Conjecture 2}. For any \(\Sigma \in \mathcal{H}(2n)\), assume \(^{\#}\widetilde{\mathcal{J}}(\Sigma) < +\infty\),
then all the closed characteristics on $\Sigma$ are irrationally elliptic.
\vskip 0.2 cm
A typical example is the non-resonant  ellipsoid in $\mathbb{R}^{2n}$, that is $\Sigma$ is defined by
\bes \sum_{i=1}^n \frac{\alpha_i}{2} (p_i^2+q_i^2)=1, \ees
where $\alpha_i/\alpha_j\in\mathbb{R}\setminus \mathbb{Q}$. There just exist $n$  closed characteristics $x_i,i=1,...,n$ and  their mean Maslov-type index  satisfy $\hat{i}(x_i)/\hat{i}(x_j)=\alpha_j/\alpha_i\in\mathbb{R}\setminus\mathbb{Q}$ and all the closed characteristics
are irrationally elliptic. Regarding the progress on Conjecture 2, in 2006, Long-Wang \cite{lonwang2006} proved that for \(n = 3\), if \(^{\#}\widetilde{\mathcal{J}}(\Sigma) = 3\), then at least two of the closed characteristics are elliptic. In 2014, Wang \cite{wang2014} further upgraded this result to show that there exist at least two irrational elliptic closed characteristics. In 2022, Wang \cite{wang2022a} established that if \(\rho_n(\Sigma) = n\) and \(^{\#}\widetilde{\mathcal{J}}(\Sigma) = n\), then at least two of them are irrational elliptic closed characteristics.
Recently, Li-Liu-Wang \cite{liliuwang2025} improves \cite{wang2022a} by eliminating the technical condition \(\rho_n(\Sigma) = n\) and prove that if \(^{\#}\widetilde{\mathcal{J}}(\Sigma) = n\), then at least two of them are irrational elliptic closed characteristics. For results on more general compact star-shaped hypersurfaces, we refer the reader to \cite{liulong1999}, \cite{hulong2002}, \cite{liulong2015}, \cite{liliuwang2025}, etc.

\begin{rem}
In general, when \(^{\#}\widetilde{\mathcal{J}}(\Sigma) < +\infty\), it hard to estimate the upper bound of \(^{\#}\widetilde{\mathcal{J}}(\Sigma)\). This relate to the
following conjecture.
\vskip 0.2 cm
\textbf{The $n$-or-$\infty$ conjecture}: For any \(\Sigma \in \mathcal{H}(2n)\), then \(^{\#}\widetilde{\mathcal{J}}(\Sigma)=n\) or $\infty$.
\vskip 0.2 cm
In \cite{HWZ1998} of 1998, Hofer-Wysocki-Zehnder proved that \(^{\#}\widetilde{\mathcal{J}}(\Sigma) =2\) or $\infty$ holds
for every \(\Sigma \in \mathcal{H}(4)\). Furthermore, Colin-Dehornoy-Rechtman \cite{CDR2023} proved that there are two
or infinitely many simple Reeb orbits for any non-degenerate contact form on a closed connected three-manifold.
Recently, this conjecture was confirmed in \cite{CHHH2025}, by proving a stronger theorem that the two or infinity result holds for any closed contact 3-manifold whose contact structure possessing torsion first Chern class. However, for $n\geq 3$, this conjecture is still widely open. It is worth mentioning that recent progress by \c{C}ineli-Ginzburg-G\"{u}rel in \cite{CGG2024} of 2024 confirmed that \(^{\#}\widetilde{\mathcal{J}}(\Sigma)\geq n\) for $\Sigma$
bounding star-shaped domains with dynamically convex Reeb flows.
%\vskip 0.2 cm
%\textbf{Conjecture 3}. For any \(\Sigma \in \mathcal{H}(2n)\), \(^{\#}\widetilde{\mathcal{J}}(\Sigma)=n\) or $\infty$.
\end{rem}
Since the upper bound of \(^{\#}\widetilde{\mathcal{J}}(\Sigma)\) is hard to estimate, as it involves the challenging the $n$-or-$\infty$ conjecture. In 2017, under the original finiteness condition in Conjecture 2, i.e \(^{\#}\widetilde{\mathcal{J}}(\Sigma) < +\infty\), Hu-Ou \cite{huou2017} proved that for any \(\Sigma \in \mathcal{H}(2n)\), there has at least two elliptic closed characteristics on $\Sigma$. In this paper, we further obtain the following conclusions.
		\begin{thm}
			 For any surfaces $\Sigma \in \mathcal{H}(2n)$ with \(^{\#}\widetilde{\mathcal{J}}(\Sigma) < +\infty\), then at least two closed characteristics are elliptic whose momodormy matrix possess a nice symplectic normal form
\begin{equation}
		\begin{aligned}
			\gamma(\tau) &\simeq N_1(1,1)\diamond I_{2p_0} \diamond N_1(1,-1)^{\diamond p_+} \diamond N_1(-1,1)^{\diamond q_-} \diamond -I_{2q_0}\\
			&\ \ \diamond R(\theta_1) \diamond \cdots \diamond R(\theta_r)
			\diamond N_2(\lambda_1, v_1) \diamond \cdots \diamond N_2(\lambda_{r_0}, v_{r_0}),\nonumber
		\end{aligned}
	\end{equation}
with all $N_{2}(\lambda_{1},\nu_{1}),\ldots,N_{2}(\lambda_{r_{0}},\nu_{r_{0}})$ are rational normal form. Moreover, when $\Sigma$ is non-degenerate, then they are irrationally elliptic.
			%Suppose $^{\#}\widetilde{\mathcal{J}}(\Sigma) = 3$ for some non-degenerate $\Sigma \in \mathcal{H}(6)$, then all three closed characteristics on $\Sigma$ are elliptic.
			\label{thm:2-irr-elliptic}
		\end{thm}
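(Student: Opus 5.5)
The plan is to combine the index-theoretic framework of Long--Zhu and Hu--Ou with a sharper analysis of the splitting numbers at rational points of the unit circle. The ingredients are as follows.

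\begin{itemize}
\item The common index jump theorem: for $^{\#}\widetilde{\mathcal{J}}(\Sigma)<+\infty$ it gives an integer $N$ and iterates $2m_k$ of each $(\tau_k,x_k)$ such that
\begin{equation*}
i(x_k,2m_k)\ge 2N-\frac{e(\gamma_k(\tau_k))}{2}, \qquad
i(x_k,2m_k)+\nu(x_k,2m_k)\le 2N+\frac{e(\gamma_k(\tau_k))}{2}-1 .
\end{equation*}
Here $\gamma_k$ is the associated symplectic path. The rational numbers $\theta_j/\pi$ can be arranged to make $m_k\theta/\pi$ an integer, so that all rational rotations become trivial at the iterate $2m_k$.
\item The Ekeland--Hofer injection map of Long--Zhu: for every $s\in\mathbb{N}$, the index level $2N-s+2\dots$ (for $s=1,\dots,n$) is realized by some $(\tau_k,x_k)$ with $m$-th iterate satisfying
\begin{equation*}
i(x_k,2m_k-2s+2)\le 2N-2s+2\le i(x_k,2m_k-2s+2)+\nu-1 .
\end{equation*}
\item The fact that the mean indices are positive: $\hat i(x_k)>2$ by convexity.
\end{itemize}

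\textbf{Step 1 (two elliptic orbits).} First I reproduce the Hu--Ou argument. The injection sends the $n$ levels $s=1,\dots,n$ to pairs $(k,m)$. Choosing $N$ large with the extra freedom in the common index jump (two different sequences $N$ and $N'$, as in Long--Zhu) shows the following. The first level and the last level must be attained by orbits whose index jump at $2m_k$ is as large as possible, namely of size $e(\gamma_k(\tau_k))/2 \ge n$ on each side. This forces $e(\gamma_k(\tau_k))=2n$ for two distinct orbits.

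\textbf{Step 2 (normal form).} Here I refine the equality case. For an elliptic orbit attaining these extremal levels, the inequalities above must be equalities. By Long's precise iteration formula,
\begin{equation*}
i(x,2m)=2m\big(i(x,1)+S^+(1)-C(M)\big)+2\sum_\theta E\Big(\frac{m\theta}{2\pi}\Big)-S^+(1)-C(M)-2\sum_\theta\ldots,
\end{equation*}
the gap between $i(x,2m)$ and $2N\pm n$ is a sum of nonnegative contributions from each basic normal form in the decomposition of $\gamma(\tau)$ given by Long's theorem. I go through the normal forms one by one.
\begin{itemize}
\item Any $N_1(1,-1)$ block with a wrong sign loses one unit, so it cannot occur. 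The same holds for $N_1(-1,-1)$.
\item A non-trivial $N_2(\omega,b)$ that is not rational, i.e.\ with $\omega$ not a root of unity, or trivial/non-trivial in the wrong sense, also loses one unit, so it cannot occur.
\item What survives is $N_1(1,1)\diamond I_{2p_0}\diamond N_1(1,-1)^{\diamond p_+}\diamond N_1(-1,1)^{\diamond q_-}\diamond -I_{2q_0}\diamond R(\theta_j)$'s, together with $N_2$ blocks of the rational type.
\end{itemize}
This is the main obstacle: a careful bookkeeping of the splitting numbers $S^\pm_M(\omega)$ at $\omega=1$ and at the rational eigenvalues, so that the equality $i(x,2m)=2N-n$ (respectively $i+\nu=2N+n-1$) pins down each block. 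I would try first to handle the $N_2$ blocks by perturbing the choice of $m_k$ within the common index jump theorem, which allows any prescribed rational residues.

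\textbf{Step 3 (non-degenerate case).} If $\Sigma$ is non-degenerate, all iterates are non-degenerate. This excludes the following:
\begin{itemize}
\item any $I_{2p_0}$ and any $N_1(1,-1)$, since these give eigenvalue $1$ beyond the forced $N_1(1,1)$;
\item any $-I$ and any $N_1(-1,1)$, since these give eigenvalue $1$ at the second iterate;
\item rational rotations $R(\theta)$ and rational $N_2$ blocks, since these give eigenvalue $1$ at some iterate.
\end{itemize}
Hence $\gamma(\tau)\simeq N_1(1,1)\diamond R(\theta_1)\diamond\cdots\diamond R(\theta_{n-1})$ with every $\theta_i/\pi$ irrational. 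These two orbits are therefore irrationally elliptic.
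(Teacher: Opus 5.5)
\textbf{The second orbit is not obtained in general.} The Long--Zhu injection gives
$i(x_{j(s)},2m_{j(s)})\le 2N-2s+n\le i(x_{j(s)},2m_{j(s)})+\nu(x_{j(s)},2m_{j(s)})-1$
at the iterate $2m_{j(s)}$ only for $s=1,\dots,\varrho_n(\Sigma)$. For degenerate $\Sigma$ one only knows $\varrho_n(\Sigma)\ge[n/2]+1$. At the last such level, $i(x,2m)\ge 2N-e/2$ together with $i(x,2m)\le 2N-2\varrho_n(\Sigma)+n$ yields only $e\ge 2(2\varrho_n(\Sigma)-n)$, far from $2n$. So your ``first and last level'' argument does not give a second elliptic orbit.

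Even when $\varrho_n(\Sigma)=n$, the relation $2n\le n+S^+_M(1)+C(M)-2\Delta$ at $s=n$ forces only $p_+=q_-=r_0=k=\Delta=0$. It does not exclude $N_1(-1,-1)$ blocks or rational non-trivial $N_2$ blocks, whereas the stated normal form needs $q_+=r_*=0$. The bookkeeping of Step 2 works only at $s=1$. There the lower inequality, the nullity $\nu(x,2m)=p_-+2p_0+p_++q_-+2q_0+q_++2(\tilde r+\tilde r_*+\tilde r_0)$, and $\Delta\le r-\tilde r+r_*-\tilde r_*$ give $2\ge p_-+q_++2r_*+2(r_0-\tilde r_0)+k+1$. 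This forces $p_-=1$, $q_+=r_*=k=0$, $r_0=\tilde r_0$. Note that $N_1(1,-1)$ blocks are \emph{not} excluded, contrary to your first bullet, while all non-trivial $N_2$ blocks, rational or not, are.

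\textbf{The missing idea.} Both orbits should be level-one orbits, taken for two opposite elements $\hat a$ and $-\hat a$ of $A(v)$.
\begin{itemize}
\item Pick $\hat a$ with $\chi_{j(1)}(\hat a)=1$. This uses $A(v)=-A(v)$, the irrationality of some $v_{j(s)}=1/\hat i(x_{j(s)})$, and the monotonicity $\chi_{j(s_2)}\le\chi_{j(s_1)}$ for $s_1<s_2$. Then $\hat i(x_{j(1)})\notin\mathbb{Q}$.
\item Show that a level-one orbit with irrational mean index cannot have $\chi_{j(1)}=0$. If it did, $\{N/(M\hat i(x_{j(1)}))\}<\epsilon$ would make $\{m_{j(1)}\hat i(x_{j(1)})\}$ close to $1$. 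Then some irrational rotation angle has $\{m_{j(1)}\theta/\pi\}\notin(0,\delta)$, which lowers the bound on $\Delta$ by one and turns the $s=1$ inequality into $2\ge 4$.
\item Since $\chi_{j(1)}(-\hat a)=0$, the level-one orbit for $-\hat a$ must differ from $x_{j(1)}$.
\end{itemize}
Your ``two different sequences $N,N'$'' provides no such distinctness mechanism.

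Your first/last-level idea does work in the non-degenerate case. There $\varrho_n(\Sigma)=n$, and the $s=n$ relation forces $\Delta=r_0=k=0$. Since each irrational non-trivial $N_2$ contributes exactly $1$ to $\Delta$, also $r_*=0$. So $x_{j(1)}\neq x_{j(n)}$ are both irrationally elliptic. This is a genuine alternative for the ``Moreover'' part, but it does not reach the main assertion for arbitrary $\Sigma$ with finitely many closed characteristics.
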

\begin{rem}
By combining the methods of \cite{CGG2024}, \cite{liliuwang2025} and this paper, the result of Theorem \ref{thm:2-irr-elliptic} can perhaps be generalized to dynamically convex Reeb flow on star-shaped surfaces.
\end{rem}
\begin{rem}
How to obtain the irrationally elliptic without the non-degeneracy condition in Theorem \ref{thm:2-irr-elliptic} is an interesting question, but for now, we are unable to do so. In fact, it is still unknown whether the following \textbf{Assumption A} holds, except for the case $n=2$. We believe it to be true for $n\geq3$, but we are currently unable to prove it; perhaps a deeper analysis of the methods in \cite{liliuwang2025} might lead to progress.
\end{rem}
\textbf{Assumption A}: Consider the surfaces $\Sigma \in \mathcal{H}(2n)$ with \(^{\#}\widetilde{\mathcal{J}}(\Sigma) < +\infty\), then for any $a\in A(v)$, $x_{j(1)}$ is an irrational
elliptic closed characteristics, where $A(v)$ and $x_{j(1)}$ is given by Theorem \ref{thm:index jump 3}.
	\begin{thm}
			 Under Assumption A, then for any surfaces $\Sigma \in \mathcal{H}(2n)$ with \(^{\#}\widetilde{\mathcal{J}}(\Sigma) < +\infty\), at least two closed characteristics are irrational elliptic.
			%Suppose $^{\#}\widetilde{\mathcal{J}}(\Sigma) = 3$ for some non-degenerate $\Sigma \in \mathcal{H}(6)$, then all three closed characteristics on $\Sigma$ are elliptic.
			\label{thm:3-irr-elliptic}
		\end{thm}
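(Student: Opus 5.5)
We will show that Theorem \ref{thm:3-irr-elliptic} follows from the mechanism behind Theorem \ref{thm:2-irr-elliptic}, combined with Assumption A. The key tool is the common index jump theorem in the form of Theorem \ref{thm:index jump 3}. Under $^{\#}\widetilde{\mathcal{J}}(\Sigma)=q<+\infty$, we write the closed characteristics as $(\tau_k,x_k)$, $1\le k\le q$. Each has positive mean index $\hat i(x_k)>2$, by convexity and the Ekeland index estimates. We then apply the index jump theorem to the associated symplectic paths $\gamma_k$ and obtain a vector $v$ with the set $A(v)$. For each $a\in A(v)$ the theorem produces iterates $2N_k$ (or $2N_k\pm 1$) of every $x_k$ whose Maslov-type indices cluster around $2N$. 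We will also use the Ekeland--Hofer/Long--Zhu injectivity map, which assigns to each integer $s$ near $N$ a closed characteristic $x_{j(s)}$ and an iterate realizing the critical value $c_s$ of the $S^1$-equivariant Clarke dual action functional, with index in $[2s-2+n, 2s-2+n]$ up to nullity. In particular, $x_{j(1)}$ attached to $a$ is the characteristic picked out at the first index level above $2N$.

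The steps run as follows. (1) Recall from Theorem \ref{thm:index jump 3} that for every $a\in A(v)$ the characteristic $x_{j(1)}$ is elliptic. We already use this in the proof of Theorem \ref{thm:2-irr-elliptic}: if it were not elliptic, then $e(\gamma_{j(1)}(\tau))<2n$, and the index jump inequalities $i(x,2N_k+1)\ge 2N+ i(x,1)$ together with $i(x,2N_k-1)+\nu(x,2N_k-1)\le 2N-(i(x,1)+2S^+(\gamma(\tau))-\nu(x,1))$ would force an index gap that contradicts the variational characterization. (2) Under Assumption A, $x_{j(1)}$ is in fact irrationally elliptic for each $a\in A(v)$. (3) It remains to produce two choices $a,a'\in A(v)$ with $x_{j(1)}(a)$ and $x_{j(1)}(a')$ geometrically distinct. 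Following Hu--Ou \cite{huou2017}, we exploit the freedom in $A(v)$: it contains $v$ and $-v$, or more generally a family of sign patterns, because the simultaneous Diophantine approximation in the index jump theorem can be done with $\{N\hat i(x_k)/\}$ close to $v_k$ or to $-v_k$. The choices $a$ and $-a$ force the chosen characteristic to sit on opposite sides of the index window. One then compares the resulting index identities. If the same $x_k$ were chosen for both $a$ and $-a$, the sum of the two identities would give $\hat i(x_k)$ equal to a rational expression. That is incompatible with the irrationality relations on mean indices coming from the resonance identity $\sum_k \hat\chi(x_k)/\hat i(x_k)=1/2$ together with the ellipticity of $x_k$, so the two must be different.

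The main obstacle is step (3): showing that two elements of $A(v)$ really pick out geometrically distinct characteristics. My plan is to reproduce the argument of \cite{huou2017}, or of the proof of Theorem \ref{thm:2-irr-elliptic}, which already yields two distinct elliptic characteristics arising as $x_{j(1)}$ for suitable $a\in A(v)$. Since that distinctness statement is insensitive to irrationality, Assumption A then upgrades both of these characteristics to irrationally elliptic, and the theorem follows. If the two characteristics in Theorem \ref{thm:2-irr-elliptic} instead arise from different index levels (for example $j(1)$ for $a$ and $j(1)$ for $-a$), I will check that both are still of the form $x_{j(1)}$ for some element of $A(v)$, so that Assumption A applies verbatim.
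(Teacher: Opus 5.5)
Your last paragraph matches the paper's proof. The paper proves Theorem \ref{thm:3-irr-elliptic} by rerunning the proof of Theorem \ref{thm:2-irr-elliptic}. That proof produces two geometrically distinct characteristics, each of the form $x_{j(1)}$ for an element of $A(v)$ (namely for $\hat a$ and for $-\hat a$), and Assumption A is then applied to each. Your step (1) is therefore unnecessary. Ellipticity of $x_{j(1)}$ also does not come from Theorem \ref{thm:index jump 3}; it is Theorem \ref{lem:Hu-Ou 3}(1).

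The distinctness argument you actually give in step (3), however, is not what works, and as written it fails. $A(v)$ need not contain $v$; only $A(v)=-A(v)$ holds. The resonance identity does not force the mean index of the selected characteristic to be irrational. And an arbitrary pair $a,-a$ gives no contradiction. If $\hat{i}(x_{j(1)},1)\in\mathbb{Q}$, then $\chi_{j(1)}(a)=\chi_{j(1)}(-a)=0$, by the remark after Theorem \ref{thm:index jump 2}. In that case nothing prevents $a$ and $-a$ from selecting the same characteristic at level $1$, and "summing the two identities" yields nothing.

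Two ingredients are missing. First, you must choose $\hat a$ with $\chi_{j(1)}(\hat a)=1$, as in Lemma \ref{lem choose a 2}:
\begin{itemize}
\item Some $v_{j(s)}=1/\hat{i}(x_{j(s)},1)$ is irrational, so $a_{j(s)}\neq 0$.
\item Using $A(v)=-A(v)$, you can flip the sign to get $\chi_{j(s)}=1$.
\item The monotonicity $\chi_{j(s_2)}(a)\le\chi_{j(s_1)}(a)$ for $s_1<s_2$ (Lemma \ref{lem:Hu-Ou 2}(iii)) then gives $\chi_{j(1)}(\hat a)=1$.
\end{itemize}
Second, you need the dichotomy of Theorem \ref{lem:Hu-Ou 3}(2): $\hat{i}(x_{j(1)},1)\notin\mathbb{Q}$ if and only if $\chi_{j(1)}(a)=1$. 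Its nontrivial direction rests on the sharpened bound $\Delta_{j(s)}\le r-\tilde r-1+r_*-\tilde r_*$ of Theorem \ref{thm delta1}. Through Corollary \ref{cor 2s}, irrational mean index together with $\chi_{j(1)}(a)=0$ would give $2\ge 4$ at $s=1$.

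With both in hand the argument closes:
\begin{itemize}
\item $\chi_{j(1)}(\hat a)=1$ gives $\hat{i}(x_{j(1)},1)\notin\mathbb{Q}$.
\item Since $\hat a_{j(1)}<0$, we have $\chi_{j(1)}(-\hat a)=0$.
\item If $-\hat a$ selected the same characteristic at level $1$, the dichotomy would be violated.
\end{itemize}
This is what makes the two characteristics distinct. Once you use this mechanism, as the proof of Theorem \ref{thm:2-irr-elliptic} does, Assumption A applies verbatim to both and the theorem follows.
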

So far, Conjecture 2 has been fully resolved only in $\mathbb{R}^4$, while it remains entirely an open problem in higher dimensions. Even in $\mathbb{R}^6$, it is unknown whether there exist three elliptic closed characteristics. Therefore, we further prove that,
		\begin{thm}
			 For any non-degenerate surfaces $\Sigma \in \mathcal{H}(6)$ with \(^{\#}\widetilde{\mathcal{J}}(\Sigma) < +\infty\), then at least three closed characteristics are elliptic, and among these three, at least two are irrational elliptic.
			%Suppose $^{\#}\widetilde{\mathcal{J}}(\Sigma) = 3$ for some non-degenerate $\Sigma \in \mathcal{H}(6)$, then all three closed characteristics on $\Sigma$ are elliptic.
			\label{thm:1-elliptic}
		\end{thm}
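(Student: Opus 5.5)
Theorem~\ref{thm:2-irr-elliptic} supplies two irrationally elliptic closed characteristics; the plan is to produce a third, geometrically distinct, elliptic one by a counting argument. The tools are the common index jump theorem (the version quoted later as Theorem~\ref{thm:index jump 3}) and the Morse-type identities for closed characteristics on non-degenerate $\Sigma\in\mathcal{H}(6)$. Write $\widetilde{\mathcal{J}}(\Sigma)=\{[(\tau_j,x_j)]\}_{j=1}^q$. By the Ekeland index theory for convex hypersurfaces, every $x_j$ has $i(x_j^m)\ge m$-type growth and mean index $\hat i(x_j)>2$. Non-degeneracy makes every $\gamma_{x_j}(\tau_j)$ splittable as $N_1(1,1)\diamond M_j$ with $M_j\in\mathrm{Sp}(4)$ non-degenerate. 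The possible normal forms of $M_j$ are then limited to four types:
\begin{itemize}
\item[(i)] $R(\theta)\diamond R(\phi)$ (elliptic);
\item[(ii)] $R(\theta)\diamond D(\lambda)$, or with $-I$-type hyperbolic pieces;
\item[(iii)] $D(\lambda_1)\diamond D(\lambda_2)$ (hyperbolic);
\item[(iv)] $N_2(\omega,b)$ with $|\omega|\neq1$ (complex hyperbolic).
\end{itemize}
Here $\theta$ may be a rational or irrational multiple of $\pi$; rational rotations are allowed by non-degeneracy only when no iterate hits $1$, which forces irrationality in the elliptic case. The goal is to show that at least three of the $x_j$ are of type (i).

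The argument proceeds in three steps.

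\textbf{Step 1 (Euler-characteristic identities).} Use the resonance/Morse identity for non-degenerate convex $\Sigma\subset\mathbb{R}^{2n}$ (Viterbo, Long--Wang),
$$\sum_{j=1}^q \frac{\hat\chi(x_j)}{\hat i(x_j)}=\frac12,$$
where $\hat\chi$ is the mean Euler characteristic. For type (iii) and (iv) orbits with even index parity, $\hat\chi=1$; orbits with alternating parity have $\hat\chi=0$ or $\tfrac12$. This constrains how many non-elliptic orbits can contribute. Also use the equality $\sum_j 1/\hat i(x_j)\cdot(\ldots)$ in the form derived from the $S^1$-equivariant homology of the space of loops, which is $\mathbb{Q}$ in each even degree $\ge 2n$ shifted by the Fadell--Rabinowitz index.

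\textbf{Step 2 (common index jump).} Apply Theorem~\ref{thm:index jump 3} to obtain large integers $N$ and iterates $m_j$ with
$$i(x_j^{2m_j})\ge 2N-\frac{e(M_j)}{2}-?,\qquad i(x_j^{2m_j+1})\le 2N+\ldots,$$
so that exactly the degrees $2N-(n-1)+2k$, $k=0,\dots,n-1$ (here $2N-2,2N,2N+2$), must be realized by distinct orbits, in the style of Long--Zhu's injection map $p$. The two orbits realizing the extreme degrees are the irrationally elliptic $x_{j(1)},x_{j(2)}$ of Theorem~\ref{thm:2-irr-elliptic}. The middle degree $2N$ is realized by some $x_{k}$.

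\textbf{Step 3 (the middle orbit and the main obstacle).} Show that this $x_k$ must be elliptic. If $x_k$ were of type (ii)–(iv), its iterated index would satisfy $i(x_k^{2m_k})=2N-\nu$ with splitting numbers forcing $i(x_k^{2m_k-1})$ or $i(x_k^{2m_k+1})$ to also land in $\{2N-2,2N+2\}$. Varying the choice of $N$ through the infinitely many admissible jump integers would then produce a contradiction with injectivity, or with the identity in Step 1. The delicate point is type (ii), where one rotation survives and the index behaves almost like the elliptic case. Here I would first try to exploit the freedom in choosing $N$ so that $\{m_k\theta/\pi\}$ lies near $0$ or near $1$. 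This would shift the middle degree realized by $x_k$ and force a fourth orbit into the same window, contradicting injectivity. If that fails, I would fall back on Step 1, combined with the fact that $\hat\chi(x_k)\neq0$ for such orbits while the two irrational elliptic orbits contribute precisely.

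If $x_k$ turns out to coincide geometrically with $x_{j(1)}$ or $x_{j(2)}$, choose a second jump integer $N'$ whose windows separate them, as in the arguments of \cite{wang2014}. This gives three distinct elliptic orbits, two of which are irrationally elliptic by Theorem~\ref{thm:2-irr-elliptic}.
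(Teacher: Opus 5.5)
There is a genuine gap, and it sits exactly where the theorem is decided: nothing in your Steps 1--3 rules out that the middle orbit is hyperbolic (your types (iii)/(iv)). For non-degenerate $\Sigma\in\mathcal{H}(6)$ one has $\nu(x_{j(s)},2m_{j(s)})=1$, $S^+_{M}(1)=p_-=1$ and $C(M)=r+2r_*$. So the two inequalities of Lemma~\ref{lem:Hu-Ou 1} collapse to the identity $2s=4+r+2r_*-2\Delta_{j(s)}$. For $s=2$ this reads $r+2r_*=2\Delta_{j(2)}$, and a hyperbolic $x_{j(2)}$ (where $r=r_*=\Delta_{j(2)}=0$) satisfies it. Varying $N$ and invoking injectivity of the jump map therefore cannot produce a contradiction. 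You also give no concrete mechanism by which the resonance identity of Step~1 would single out this orbit; the paper notes that such methods had previously yielded only two elliptic orbits in $\mathbb{R}^6$.

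The paper closes this case with a non-index-theoretic input, Corollary~\ref{cor Ginzburgb} (\c{C}ineli--Ginzburg--G\"{u}rel--Mazzucchelli): one hyperbolic closed characteristic forces infinitely many, contradicting finiteness. By contrast, the mixed case $N_1(1,1)\diamond R(\theta)\diamond D(\lambda)$ that you flag as delicate is easy. There $r=1$ and $r_*=0$, so $r+2r_*=2\Delta_{j(2)}$ fails by parity. For $s=3$ the identity gives $r+2r_*=2+2\Delta_{j(3)}\ge 2$, hence $r=2$ or $r_*=1$, which is ellipticity with no outside input. Your list of elliptic types also omits $N_2(e^{\sqrt{-1}\theta},b)$, which is precisely the $r_*=1$ case.

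Second, your identification ``the two orbits realizing the extreme degrees are $x_{j(1)},x_{j(2)}$ of Theorem~\ref{thm:2-irr-elliptic}'' is unjustified. In that theorem the two orbits are the $s=1$ orbits of two different jumps, one for $\hat a$ and one for $-\hat a$. Nothing places them at the two ends of a single window, and for a fixed $N$ that theorem says nothing about ellipticity of the lowest-degree orbit. So your final count (two orbits from Theorem~\ref{thm:2-irr-elliptic} plus a distinct middle one) is not established either.

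The paper avoids this by fixing one $a$:
\begin{itemize}
\item $x_{j(1)},x_{j(2)},x_{j(3)}$ are geometrically distinct by Theorem~\ref{thm:index jump 3};
\item $x_{j(1)}$ is irrationally elliptic by Theorem~\ref{lem:Hu-Ou 3};
\item $x_{j(2)}$ and $x_{j(3)}$ are elliptic by the argument above;
\item at least $\varrho_3(\Sigma)-1=2$ of the three have irrational mean index, so one of $x_{j(2)},x_{j(3)}$ does.
\end{itemize}
By (\ref{formula:mean index}) and non-degeneracy, that orbit must then be $N_1(1,1)\diamond R(\theta_1)\diamond R(\theta_2)$ with $\theta_1/\pi$ and $\theta_2/\pi$ irrational.
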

\begin{rem}
Based on the $n$-or-$\infty$ conjecture, when \(^{\#}\widetilde{\mathcal{J}}(\Sigma) < +\infty\), then \(^{\#}\widetilde{\mathcal{J}}(\Sigma)=3\), hence three elliptic closed characteristics is optimal.
\end{rem}
		
		%%我们的结果主要通过……
		Our results are mainly derived through analytical deduction by using of the Long-Zhu's common index jump methods \cite{lonzhu2002} and the estimation methods on iterative indices presented in the work of Hu-Ou \cite{huou2017}. In Section 2, we introduce the relevant concepts and formulas involved in this paper, and Section 3 provides a concise proof of the main Theorems.
		
		Let $\mathbb{N}, \mathbb{Z}, \mathbb{Q}, \mathbb{Q}^+, \mathbb{R},$ and $\mathbb{R}^+$ denote the sets of natural integers, integers, rational numbers, positive rational number, real numbers, and positive real numbers respectively. And let $(\cdotp , \cdotp )$, $| \cdotp |$, $\langle \cdot, \cdot \rangle$, $\|\cdot\|$ denote the standard inner product, the standard norm in $\mathbb{R}^{2n}$, the standard $L^2$ inner product and $L^2$ norm respectively. We also define the functions
		\begin{equation*}
		\begin{cases}
			{\lfloor a \rfloor} = \max\{k \in \mathbb{Z} \mid k \leq a\}, \quad &E(a) = \min\{k \in \mathbb{Z} \mid k \geq a\}, \\
			\{a\} = a - \lfloor a \rfloor, \quad &\varphi(a) = E(a) - \lfloor a \rfloor.
		\end{cases}
		\end{equation*}

	\section{ Review of the Maslov-type index theory}
	In this section, we briefly review the index theory for symplectic paths, for comprehensive details, please refer to \cite{lon1990}, \cite{lon2000} and \cite{lonzhu2002}.

	 \subsection{Definition of the Maslov-type index}
	As usual, let $\omega \in \mathbb{U}$, the symplectic group $\mathrm{Sp}(2n)$ is defined by
$$ \mathrm{Sp}(2n) = \{M\in \mathrm{GL}(2n,\mathbb{R})\,|\,M^TJM=J\}, $$
whose topology is induced from that of $\mathbb{R}^{4n^2}$. For $\tau>0$ we are interested
in paths in $\mathrm{Sp}(2n)$:
$$ \mathcal{P}_{\tau}(2n) = \{\gamma\in C([0,\tau],\mathrm{Sp}(2n))\,|\,\gamma(0)=I_{2n}\}, $$
which is equipped with the topology induced from that of $\mathrm{Sp}(2n)$. The
following real function was introduced in \cite{lon2000}:
$$ D_{\omega}(M) = (-1)^{n-1}\overline{\omega}^n\det(M-\omega I_{2n}), \ \
          \forall \omega\in\mathbb{U},\, M\in\mathrm{Sp}(2n). $$
Thus for any $\omega\in\mathbb{U}$ the following codimension one hypersurface in $\mathrm{Sp}(2n)$ is
defined in \cite{lon2000}:
$$ \mathrm{Sp}(2n)_{\omega}^0 = \{M\in\mathrm{Sp}(2n)\,|\, D_{\omega}(M)=0\}.  $$
For any $M\in \mathrm{Sp}(2n)_{\omega}^0$, we define a co-orientation of $\mathrm{Sp}(2n)_{\omega}^0$
at $M$ by the positive direction $\frac{d}{dt}Me^{t\epsilon J}|_{t=0}$ of
the path $Me^{t\epsilon J}$ with $0\leq t\leq1$ and $\epsilon>0$ being sufficiently
small. Let
\bea
\mathrm{Sp}(2n)_{\omega}^{\ast} &=& \mathrm{Sp}(2n)\setminus \mathrm{Sp}(2n)_{\omega}^0,   \nonumber\\
\mathcal{P}_{\tau,\omega}^{\ast}(2n) &=&
      \{\gamma\in\mathcal{P}_{\tau}(2n)\,|\,\gamma(\tau)\in\mathrm{Sp}(2n)_{\omega}^{\ast}\}, \nonumber\\
\mathcal{P}_{\tau,\omega}^0(2n) &=& \mathcal{P}_{\tau}(2n)\setminus  \mathcal{P}_{\tau,\omega}^{\ast}(2n).  \nonumber\eea
For any two continuous arcs $\xi$ and $\eta:[0,\tau]\to\mathrm{Sp}(2n)$ with
$\xi(\tau)=\eta(0)$, it is defined as usual:
$$
\eta\ast\xi(t) =\begin{cases}
		\xi(2t), & \text{if } 0\le t\le \tau/2, \\
		\eta(2t-\tau), & \text{if } \tau/2\le t\le \tau.
	\end{cases}\\
$$
Given any two $2m_k\times 2m_k$ matrices of square block form
$M_k=\left(
             \begin{array}{cc}
               A_k & B_k \\
               C_k & D_k \\
             \end{array}
           \right)
$
with $k=1, 2$,
as in \cite{lon2002}, the $\diamond$-product of $M_1$ and $M_2$ is defined by
the following $2(m_1+m_2)\times 2(m_1+m_2)$ matrix $M_1\diamond M_2$:
\begin{equation*} M_1\diamond M_2=\left(
                                    \begin{array}{cccc}
                                      A_1 & 0 & B_1 & 0 \\
                                      0 & A_2 & 0 & B_2 \\
                                      C_1 & 0 & D_1 & 0 \\
                                      0 & C_2 & 0 & D_2 \\
                                    \end{array}
                                  \right),
                               \end{equation*}  %\dm=\diamond
and denote by $M^{\diamond k}$ the $k$-fold $\diamond$-product $M\diamond\cdots\diamond M$.
Note that the $\diamond$-product of any two symplectic matrices is symplectic. For any two
paths $\gamma_j\in\mathcal{P}_{\tau}(2n_j)$ with $j=0$ and $1$, let
$\gamma_0\diamond\gamma_1(t)= \gamma_0(t)\diamond\gamma_1(t)$ for all $t\in [0,\tau]$.
%
%A special path $\xi_n\in \mathcal{P}_{\tau}(2n)$ is defined by
%\bea
%\xi_n(t) = \left(
%                \begin{array}{cc}
%                     2-\frac{t}{\tau} & 0 \\
%                     0 & (2-\frac{t}{\tau})^{-1} \\
%                   \end{array}
%                 \right)^{\diamond n}\ \ \text{ for}\;0\leq t\leq \tau.
%\eea

%and we can define the \(\omega\)-singular group \(\mathrm{Sp}(2n)_\omega^0\) and the \(\omega\)-regular group \(\mathrm{Sp}(2n)_\omega^*\) within the symplectic group \(\mathrm{Sp}(2n)\). Consider symplectic paths
%	\[
%	\mathcal{P}_\tau(2n) = \left\{ \gamma \in C([0, \tau], \mathrm{Sp}(2n)) \mid \gamma(0) = I \right\}
%	\]
%	 in \(\mathrm{Sp}(2n)\), and denote the \(\omega\)-singular path set and \(\omega\)-regular path set by \(\mathcal{P}_{\tau, \omega}^0(2n)\) and \(\mathcal{P}_{\tau, \omega}^*(2n)\), respectively.
For \(\gamma(\tau ) \in \mathrm{Sp}(2n)_\omega^*\), the \(\omega\)-index of \(\gamma\) can be defined by
\bea
	i_\omega(\gamma) = \left[ \mathrm{Sp}(2n)_\omega^0 : \gamma * \xi_n \right],\label{index function}
\eea
where \(
	\xi_n(t) = \begin{pmatrix} 2 - \frac{t}{\tau} & 0 \\ 0 & \left(2 - \frac{t}{\tau}\right)^{-1} \end{pmatrix}^{\diamond n}, 0 \leq t \leq \tau
	\) and the right hand side of (\ref{index function}) is the usual homotopy intersection
number, the orientation of $\gamma\ast\xi_n$ is its positive time direction under
homotopy with fixed end points.

	For \(\gamma(\tau) \in \mathrm{Sp}(2n)_\omega^0\), the \(\omega\)-index of \(\gamma\) is defined by
	$$
	i_\omega(\gamma) = \sup_{U \in \mathcal{F}(\gamma)} \inf \left\{ i_\omega(\beta) \mid \beta \in U \cap \mathcal{P}_{\tau, \omega}^*(2n) \right\},
	$$
	where \(\mathcal{F}(\gamma)\) denotes the set of open neighborhoods of \(\gamma\) in \(\mathcal{P}_\tau(2n)\). For any symplectic matrix \(M \in \mathrm{Sp}(2n)\), the \(\omega\)-nullity is
	\[
	\nu_\omega(M) = \dim_{\mathbb{C}} \ker_{\mathbb{C}} (M - \omega I).
	\]
	In particular, we set \(\nu_\omega(\gamma) := \nu_\omega(\gamma(\tau))\). The integer pair \((i_\omega(\gamma), \nu_\omega(\gamma)) \in \mathbb{Z} \times \{0, 1, \dots, 2n\}\) is called the index function of \(\gamma\) with respect to \(\omega\).

For any $\gamma\in\mathcal{P}_{\tau}(2n)$, For simplicity, define
$$(i(\gamma,k), \nu(\gamma,k)) = (i_1(\gamma^k), \nu_1(\gamma^k)), \ \ \forall k\in\mathbb{N}.
$$
where \((i_1(\gamma^k), \nu_1(\gamma^k))\) is the index function of the iterative symplectic path \(\gamma^k(t) := \gamma(t - j\tau) \gamma(\tau)^j\), here \(j\tau \leq t \leq (j+1)\tau\), \(j = 0, 1, \dots, k-1\).
The mean index $\hat{i}(\gamma,m)$ per $m\tau$ for $m\in\mathbb{N}$ is defined by
$$ \hat{i}(\gamma,m) = \lim_{k\to +\infty}\frac{i(\gamma,mk)}{k}. $$

For $\Sigma\in \mathcal{H}(2n)$ and $\alpha\in(1,2)$, let $(\tau,x)\in \mathcal{J}(\Sigma,\alpha)$. For simplicity, we define
\bea
S^{+}(x)&=&S^{+}_{\gamma_{x}(\tau)}(1),\nonumber\\
(i(x,m),\nu(x,m))&=&(i(\gamma_{x},m),\nu(\gamma_{x},m)),\nonumber\\
\hat{i}(x,m)&=&\hat{i}(\gamma_{x},m),\nonumber
\eea
For all $m\in \mathbb{N}$, where $\gamma_{x}$ is the associated symplectic path of $(\tau,x)$.	
	\subsection{Iterative formula of the Maslov-type index}\label{sec2.2}
	The splitting numbers of a symplectic matrix \( M \in \mathrm{Sp}(2n) \) with respect to \( \omega \in \mathbb{U} \) can be defined as
	\[
	S_M^\pm(\omega) = \lim_{\varepsilon \to 0^+} i_{\omega \exp(\pm\sqrt{-1}\varepsilon)}(\gamma) - i_\omega(\gamma).
	\]
    Clearly, if \( \omega \notin \sigma(M) \), then \( S_M^\pm(\omega) = 0 \). If \( \omega \in \sigma(M) \), since splitting numbers are symplectically additive and any symplectic matrix can be decomposed into the diamond product of basic normal forms, we present the splitting numbers of basic normal forms below.
	
	For the basic normal form
	\begin{equation*}
		\begin{aligned}
	&D(\lambda) = \begin{pmatrix} \lambda & 0 \\ 0 & \lambda^{-1} \end{pmatrix}, \quad \lambda = \pm 2;\ \
	N_1(\lambda, a) = \begin{pmatrix} \lambda & a \\ 0 & \lambda \end{pmatrix}, \quad \lambda = \pm 1, a = \pm 1, 0,\\
	&R(\theta) = \begin{pmatrix} \cos\theta & -\sin\theta \\ \sin\theta & \cos\theta \end{pmatrix},\ \
	N_2(\omega, b) = \begin{pmatrix} R(\theta) & b \\ 0 & R(\theta) \end{pmatrix}, \quad \theta \in (0, \pi) \cup (\pi, 2\pi),\\
	    \end{aligned}
	\end{equation*}
	where \( b = \begin{pmatrix} b_1 & b_2 \\ b_3 & b_4 \end{pmatrix} \), \( b_i \in \mathbb{R} \) and \( b_2 \neq b_3 \), \( \omega = e^{\sqrt{-1}\theta} \), we have
	\begin{equation*}
		\begin{aligned}\label{splitting sq}
	\left( S_{D(\lambda)}^+(e^{\sqrt{-1}\theta}), S_{D(\lambda)}^-(e^{\sqrt{-1}\theta}) \right)& = (0,0),\ \
   	\left( S_{R(\theta)}^+(e^{\sqrt{-1}\theta}), S_{R(\theta)}^-(e^{\sqrt{-1}\theta}) \right) = (0,1),\\
	\left( S_{N_1(1,a)}^+(1), S_{N_1(1,a)}^-(1) \right) &=
	\begin{cases}
		(1,1), & \text{if } a = 1 \ \text{or} \  0, \\
		(0,0), & \text{if } a = -1,
	\end{cases}\\
	\left( S_{N_1(-1,a)}^+(-1), S_{N_1(-1,a)}^-(-1) \right) &=
	\begin{cases}
		(1,1), & \text{if } a = -1 \ \text{or} \  0, \\
		(0,0), & \text{if } a = 1,
	\end{cases}\\
	\left( S_{N_2(\omega,b)}^+(e^{\sqrt{-1}\theta}), S_{N_2(\omega,b)}^-(e^{\sqrt{-1}\theta}) \right)& =
	\begin{cases}
		(1,1), & \text{if \( N_2(\omega, b) \) is nontrivial}, \\
		(0,0), & \text{if \( N_2(\omega, b) \) is trivial}.
	\end{cases}
	    \end{aligned}
	\end{equation*}
	 Here we call \( N_2(\omega, b) \) nontrivial if \( (b_2 - b_3)\sin\theta < 0 \) and trivial if \( (b_2 - b_3)\sin\theta > 0 \).
For any $M_i\in \emph{Sp}(2n_i)$ with $i=0$ and $1$, there holds
	\begin{equation*} S^{\pm}_{M_0\diamond M_1}(\omega) = S^{\pm}_{M_0}(\omega) + S^{\pm}_{M_1}(\omega),\ \
 \text{and}\ \ S^{\pm}_{M}(\omega)=S^{\mp}_{M}(\bar{\omega}),\ \ \forall\ \ \omega\in\mathbb{U}.	\end{equation*}
where $\bar{\omega}$ is the conjugate of $\omega$. %The $\diamond$-product of $M_1$ and $M_2$ is defined by
%the following $2(m_1+m_2)\times 2(m_1+m_2)$ matrix $M_1\diamond M_2$:
%\begin{equation*} M_1\diamond M_2=\left(
%                                    \begin{array}{cccc}
%                                      A_1 & 0 & B_1 & 0 \\
%                                      0 & A_2 & 0 & B_2 \\
%                                      C_1 & 0 & D_1 & 0 \\
%                                      0 & C_2 & 0 & D_2 \\
%                                    \end{array}
%                                  \right),
%                               \end{equation*}  %\dm=\diamond
%and denote by $M^{\diamond k}$ the $k$-fold $\diamond$-product $M\diamond\cdots\diamond M$.
Based on the splitting numbers, Long-Zhu \cite{lonzhu2002} presented the abstract precise iteration formula for the Maslov-type index:
	\begin{equation*}
	\begin{aligned}
		i(\gamma, m) &= m\left(i(\gamma, 1) + S_M^+(1) - C(M)\right) \\
		&\quad + 2 \sum_{\theta \in (0, 2\pi)} E\left(\frac{m\theta}{2\pi}\right) S_M^-\left(e^{\sqrt{-1}\theta}\right) - \left(S_M^+(1) + C(M)\right),
	\end{aligned}
	\end{equation*}
	where
	\(
	C(M) = \sum_{0 < \theta < 2\pi} S_M^-\left(e^{\sqrt{-1}\theta}\right),
	\)
	and $\gamma(\tau)$ is symplectically similar to the following form,
	\begin{equation}
		\begin{aligned}
			\gamma(\tau) &\simeq N_1(1,1)^{\diamond p_-} \diamond I_{2p_0} \diamond N_1(1,-1)^{\diamond p_+} \diamond N_1(-1,1)^{\diamond q_-} \diamond -I_{2q_0} \diamond N_1(-1,-1)^{\diamond q_+} \\
			&\quad \diamond R(\theta_1) \diamond \cdots \diamond R(\theta_r) \diamond N_2(\omega_1, u_1) \diamond \cdots \diamond N_2(\omega_{r_*}, u_{r_*}) \\
			&\quad \diamond N_2(\lambda_1, v_1) \diamond \cdots \diamond N_2(\lambda_{r_0}, v_{r_0}) \diamond M_k,
		\end{aligned}
		\label{formula:decomposition of matrices}
	\end{equation}
	where \( N_2(\omega_j, u_j) \) are nontrivial forms with \( \omega_j = e^{\sqrt{-1}\alpha_j} \), \( \alpha_j \in (0, \pi) \cup (\pi, 2\pi) \) and \( u_j = \begin{pmatrix} u_{j1} & u_{j2} \\ u_{j3} & u_{j4} \end{pmatrix} \in \mathbb{R}^{2 \times 2} \); \( N_2(\lambda_j, v_j) \) are trivial forms with \( \lambda_j = e^{\sqrt{-1}\beta_j} \), \( \beta_j \in (0, \pi) \cup (\pi, 2\pi) \) and \( v_j = \begin{pmatrix} \nu_{j1} & \nu_{j2} \\ \nu_{j3} & \nu_{j4} \end{pmatrix} \in \mathbb{R}^{2 \times 2} \); \( M_k = D(2)^{\diamond k} \) or \( D(-2) \diamond D(2)^{\diamond (k-1)} \); \( p_-, p_0, p_+, q_-, q_0, q_+, r, r_* \) and \( r_0 \) are non-negative integers.
Furthermore, we say that a normal form is a rational normal form if the rotation angle in the normal form is a rational multiple of $\pi$. Then the number of the rational normal form
in $\{R(\theta_{1}),\ldots,R(\theta_{r})\}$ is denoted by $\tilde{r}$. Similarly,
for set $\{N_{2}(\omega_{1},u_{1}),\ldots,N_{2}(\omega_{r_{*}},u_{r_{*}})\}$
and $\{N_{2}(\lambda_{1},\nu_{1}),\ldots,N_{2}(\lambda_{r_{0}},\nu_{r_{0}})\}$,
the number of rational normal form is
denoted by $\tilde{r}_{*}$ and $\tilde{r}_{0}$ respectively. These integers and real numbers are uniquely determined by \( \gamma(\tau) \).
	
	Then we can rewrite  the iteration formula as
	\begin{equation*}
		\begin{aligned}
			i(\gamma, m) &= m\left(i(\gamma, 1) + p_- + p_0 - r\right) + 2\sum_{l=1}^r E\left(\frac{m\theta_j}{2\pi}\right) - r - p_- - p_0 \\
			&\quad -\frac{1+(-1)^m}{2}(q_0 + q_+) + 2\left( \sum_{l=1}^{r_*} \varphi\left(\frac{m\alpha_j}{2\pi}\right) - r_* \right),
		\end{aligned}
		\label{formula:iteration index}
	\end{equation*}
	
	\begin{equation}
		\begin{aligned}
			\nu(\gamma, m) &= \nu(\gamma, 1) + \frac{1+(-1)^m}{2}(q_- + 2q_0 + q_+) + 2(r + r_* + r_0) \\
			&\quad - 2\left( \sum_{l=1}^r \varphi\left(\frac{m\theta_l}{2\pi}\right) + \sum_{l=1}^{r_*} \varphi\left(\frac{m\alpha_l}{2\pi}\right) + \sum_{l=1}^{r_0} \varphi\left(\frac{m\beta_l}{2\pi}\right) \right),
		\end{aligned}
		\label{formula:iteration nullity}
	\end{equation}
	
	\begin{equation}
		\hat{i}(\gamma, 1) = i(\gamma, 1) + p_- + p_0 - r + \sum_{l=1}^r \frac{\theta_l}{\pi},
		\label{formula:mean index}
	\end{equation}
	
	\begin{equation}
	S_M^+(1) = p_- + p_0,
	\label{formula:S_M^+}
    \end{equation}
	
	\begin{equation}
	C(M) = \sum_{0<\theta<2\pi} S_M^-(e^{\sqrt{-1}\theta}) = q_0 + q_+ + r + 2r_*.
	\label{formula:C(M)}
    \end{equation}
	
	\subsection{Common index jump theorem}
	%%有限族辛道路的公共选择定理
	We first introduce the common index jump theorem for finite families of symplectic paths, which can be found in Long-Zhu \cite{lonzhu2002}.
	%Long and Zhu[Long-Zhu(2001)] presented the following common selection theorem for finite families of symplectic paths.
		\begin{thm}
			Let $\gamma_k \in \mathcal{P}_{\tau_k}(2n)$ for $k = 1, \dots, q$ be a finite family of symplectic paths, and let $M_k = \gamma_k(\tau_k)$. Suppose $\delta \in \left(0, \frac{1}{2}\right)$ satisfies $\delta \max\limits_{1 \leq k \leq q} \mu_k < \frac{1}{2}$, where $\mu_k = \sum_{\theta \in (0, 2\pi)} S_{M_k}^-(e^{\sqrt{-1}\theta})$. Suppose that  $\hat{i}(\gamma_k, 1) > 0$ for any $k = 1, \dots, q$, then there exist infinitely many $(N, m_1, \dots, m_q) \in \mathbb{N}^{q+1}$ such that
			\[
			I(k, m_k) = N + \Delta_k,
			\]
for any $k = 1, \dots, q$, where $$I(k, m_k) = m_k\left(\hat{i}(\gamma_k, 1) + S_{M_k}^+(1) - C(M_k)\right) + \sum_{\theta \in (0, 2\pi)} E\left(\frac{m_k \theta}{\pi}\right)S_{M_k}^-(e^{\sqrt{-1}\theta}),\ \ \Delta_k = \sum_{0 < \left\{m_k \frac{\theta}{\pi}\right\} < \delta} S_{M_k}^-(e^{\sqrt{-1}\theta}).$$
Morover, we have
	\bea
			\min\left\{ \left\{ \frac{m_k \theta}{\pi} \right\}, 1 - \left\{ \frac{m_k \theta}{\pi} \right\} \right\} < \delta, \label{delta}
			\eea
			\begin{equation}
			m_k \frac{\theta}{\pi} \in \mathbb{N}, \quad \text{if}\ \ \frac{\theta}{\pi} \in \mathbb{Q},\label{3.40}
			\end{equation}
where $e^{\sqrt{-1}\theta}\in\sigma(M_{k})$, $\frac{\theta}{\pi}\in(0,2)$ and $\delta$ can be chosen as small as we want.
			\label{thm:index jump 1}
		\end{thm}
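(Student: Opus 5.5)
Since the statement only concerns a finite family of paths, I would reduce it to a simultaneous Diophantine approximation problem on a torus, in the spirit of Long--Zhu \cite{lonzhu2002}. The splitting numbers are symplectically additive, and $M_k$ has the normal form (\ref{formula:decomposition of matrices}); from this and (\ref{formula:mean index})--(\ref{formula:C(M)}) one reads off the identity
$$\hat{i}(\gamma_k,1)=i(\gamma_k,1)+S^+_{M_k}(1)-C(M_k)+\sum_{\theta\in(0,2\pi)}\frac{\theta}{\pi}S^-_{M_k}(e^{\sqrt{-1}\theta}).$$
I read the first summand of $I(k,m_k)$ as $m_k\bigl(i(\gamma_k,1)+S^+_{M_k}(1)-C(M_k)\bigr)$, which is Long--Zhu's original normalization. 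This quantity is an integer, and by the identity above it can be rewritten as
$$I(k,m)=m\,\hat{i}(\gamma_k,1)+\sum_{\theta}\Bigl(E\bigl(\tfrac{m\theta}{\pi}\bigr)-\tfrac{m\theta}{\pi}\Bigr)S^-_{M_k}(e^{\sqrt{-1}\theta}).$$
So the goal is to find $N$ and $m_k$ such that, for every $k$, $m_k\hat i(\gamma_k,1)$ lies within a tiny $\varepsilon$ of $N$ and each $\{m_k\theta/\pi\}$ lies within $\delta$ of $0$ or $1$. Then $I(k,m_k)-N$ is an integer that differs from $\Delta_k$ by less than $\varepsilon+\delta\mu_k<1/2$. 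Indeed, a term with $0<\{m_k\theta/\pi\}<\delta$ contributes almost exactly $S^-$, while a term with $\{m_k\theta/\pi\}$ close to $1$ or equal to $0$ contributes almost exactly $0$. Hence $I(k,m_k)=N+\Delta_k$.

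To produce such integers, let $D$ be a common multiple of the denominators of all rational $\theta/\pi$ occurring in the spectra of the $M_k$. I would consider the vector
$$v=\Bigl(\frac{1}{D\hat i(\gamma_k,1)},\ \frac{\theta}{\pi\,\hat i(\gamma_k,1)}\Bigr)_{k,\theta}\in\mathbb{R}^{L},$$
which makes sense because $\hat i(\gamma_k,1)>0$. The closure of $\{Nv \bmod \mathbb{Z}^L: N\in\mathbb{N}\}$ is a closed subgroup of the torus containing $0$. By Dirichlet's theorem, or equivalently the pigeonhole principle, there are infinitely many $N_0\in\mathbb{N}$ with $\|N_0v\|_{\mathbb{T}^L}<\varepsilon$. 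For such $N_0$ I would set
$$N=DN_0,\qquad m_k=D\,\Bigl\lfloor\frac{N_0}{D\hat i(\gamma_k,1)}\Bigr\rceil,$$
where $\lfloor\cdot\rceil$ denotes the nearest integer.

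Next I would check that these choices work. First, $|m_k\hat i(\gamma_k,1)-N|\le D^2\hat i(\gamma_k,1)\,\varepsilon$, which is small once $\varepsilon$ is chosen after $D$. Second, for each irrational $\theta$, the quantity $m_k\theta/\pi$ differs from $N_0\,\theta/(\pi\hat i(\gamma_k,1))\cdot D$ by $O(D^2\varepsilon)$. Since the coordinates $N_0\,\theta/(\pi\hat i_k)$ are $\varepsilon$-close to integers, multiplying by $D$ keeps them close to integers, and this gives (\ref{delta}). Third, for rational $\theta/\pi$, the choice $D\mid m_k$ gives $m_k\theta/\pi\in\mathbb{N}$, which is (\ref{3.40}). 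Since $\varepsilon$, and hence $\delta$, can be taken arbitrarily small and there are infinitely many admissible $N_0$, we obtain infinitely many tuples $(N,m_1,\dots,m_q)$.

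The main obstacle is the bookkeeping of the error terms. The constants must be fixed in the right order: first $D$, then $\delta$ with $\delta\mu_k<1/2$, and only then $\varepsilon$, small relative to $D$, $\delta$ and $\max_k\hat i(\gamma_k,1)$. One must also handle the boundary case $\{m_k\theta/\pi\}$ near $1$ separately from the case near $0$, so that exactly the terms counted in $\Delta_k$ contribute $S^-$. Once this is done, integrality of $I(k,m_k)-N-\Delta_k$ together with the bound $<1/2$ closes the argument.
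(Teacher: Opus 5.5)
Your reading of the first summand of $I(k,m_k)$ as $m_k\bigl(i(\gamma_k,1)+S^+_{M_k}(1)-C(M_k)\bigr)$ is correct. As printed, with $\hat{i}(\gamma_k,1)\notin\mathbb{Q}$ the left side is not an integer, so the identity could not hold. Your error analysis via $E(x)-x$ is also fine.

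\textbf{The gap.} The construction is normalized inconsistently. Put $n_k=\lfloor N_0/(D\hat{i}(\gamma_k,1))\rceil$ and write $N_0=D\hat{i}(\gamma_k,1)(n_k+e_k)$ with $|e_k|<\varepsilon$. Then
$m_k\hat{i}(\gamma_k,1)=Dn_k\hat{i}(\gamma_k,1)=N_0-D\hat{i}(\gamma_k,1)e_k$,
so $m_k\hat{i}(\gamma_k,1)$ is close to $N_0$, not to $DN_0$. With $N=DN_0$ the difference is $-(D-1)N_0-D\hat{i}(\gamma_k,1)e_k$. Your first estimate therefore fails as soon as $D\ge 2$, which happens whenever some rational angle other than $\pi$ occurs.

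Likewise $m_k\theta/\pi=N_0\theta/(\pi\hat{i}(\gamma_k,1))-D(\theta/\pi)e_k$. So your second check compares $m_k\theta/\pi$ with $D$ times the right quantity.

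\textbf{The repair.} Take $N=N_0$. Then:
\begin{itemize}
\item $|m_k\hat{i}(\gamma_k,1)-N|<D\hat{i}(\gamma_k,1)\varepsilon$;
\item each $m_k\theta/\pi$ lies within $(2D+1)\varepsilon$ of an integer;
\item choosing $\varepsilon$ with $(2D+1)\varepsilon<\delta$ and $D\max_k\hat{i}(\gamma_k,1)\varepsilon<\frac12$ gives $|I(k,m_k)-N-\Delta_k|<1$, hence equality.
\end{itemize}
Also $m_k\ge 1$ for $N_0$ large, and (\ref{3.40}) holds because $D\mid m_k$. Alternatively, keep $N=DN_0$ but use the coordinates $1/\hat{i}(\gamma_k,1)$ in $v$ and $m_k=D\lfloor N_0/\hat{i}(\gamma_k,1)\rceil$; this is the normalization your second check implicitly assumes.

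\textbf{Comparison with the paper.} With this fix, your argument is a correct and more elementary special case of the route the paper takes from Long--Zhu. The paper does not reprove the theorem. It reduces it to (\ref{3.36}), obtains an approximated vertex $\chi(a)$ for any $a\in A(v)$ from Theorem \ref{thm:index jump 2}, and sets $m_k=\bigl([N/(M\hat{i}(\gamma_k,1))]+\chi_k(a)\bigr)M$.

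You only approach the vertex $\chi=0$. For that, Dirichlet's theorem replaces the analysis of the closure $H$ and its tangent space $V$, and this suffices for the statement exactly as written.

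What you lose is control of the vertex. Equivalently, you lose control of the sign of $m_k\hat{i}(\gamma_k,1)-N$: it is $\le 0$ when $\chi_k(a)=0$ and $>0$ when $\chi_k(a)=1$, whereas nearest-integer rounding leaves that sign undetermined. The paper relies on this control later:
\begin{itemize}
\item Lemma \ref{lem:Hu-Ou 2} is phrased through it;
\item Lemma \ref{lem choose a 2} needs $\chi_{j(1)}(\hat{a})=1$;
\item the proof of Theorem \ref{thm:2-irr-elliptic} plays $\hat{a}$ against $-\hat{a}$.
\end{itemize}
So your version proves Theorem \ref{thm:index jump 1} as stated, but it would not substitute for the vertex form in those applications.
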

		According to \cite{lonzhu2002}, the existence of infinitely many integer tuples $(N, m_1, \dots, m_q) \in \mathbb{N}^{q+1}$ in Theorem \ref{thm:index jump 1} can be transformed into a dynamical system problem on the torus. Let
			\[
			h = q + \sum_{k=1}^q \mu_k
			\]
			and
			\bea
			\nu = \left( \frac{1}{\hat{i}(\gamma_{1, 1})}, \dots, \frac{1}{\hat{i}(\gamma_{q,1})}, \frac{\theta_{1, 1}}{\pi \hat{i}(\gamma_{1, 1})}, \frac{\theta_{1, 2}}{\pi \hat{i}(\gamma_{1, 1})}, \dots, \frac{\theta_{1, \mu_1}}{\pi \hat{i}(\gamma_{1, 1})}, \frac{\theta_{2, 1}}{\pi \hat{i}(\gamma_{2,1})}, \dots, \frac{\theta_{q, \mu_q}}{\pi \hat{i}(\gamma_{q,1})} \right) \in \mathbb{R}^h,\label{3.35}
			\eea
where $e^{\sqrt{-1}\theta_{i,j}}\in \sigma (M_{i})$
for $1\leq j\leq \mu_{i}$, then Theorem \ref{thm:index jump 1} is equivalent to find a vertex $$\chi = (\chi_1, \dots, \chi_q, \chi_{1,1}, \dots, \chi_{1,\mu_1}, \dots \chi_{q,1}, \dots, \chi_{q,\mu_q})$$ on the torus $[0,1]^h$ and infinitely many integers $N \in \mathbb{N}$ such that
			\begin{equation}
			\bigl| \{ N\nu \} - \chi \bigr| < \epsilon, \label{3.36}
			\end{equation}
			where $\epsilon \in (0, \min\{\delta, 1/3\})$, and $\chi_k, \chi_{k,j} = 0$ or $1$ for $k = 1, \dots, q$ and $j = 1, \dots, \mu_1, 2, \dots, \mu_2, \dots, \mu_q$.
In fact, such a vertex $\chi$ exists, as stated in Theorem \ref{thm:index jump 2}.
       	\begin{thm}(cf. Theorem 4.2 of \cite{lonzhu2002})
       		Let the closure of $\{ mv \mid m \in \mathbb{N} \}$ on $T^h = (\mathbb{R}/\mathbb{Z})^h$ be $H$, and let $\pi: \mathbb{R}^h \to T^h$ be the projection map. Let $V = T_0 \pi^{-1}H$ denote the tangent space of $\pi^{-1}H$ at the origin of $\mathbb{R}^h$, and define
       		\[
       		A(v) = V \setminus \bigcup_{v_k \in \mathbb{R} \setminus \mathbb{Q}} \left\{ x = (x_1, \dots, x_h) \in V \mid x_k = 0 \right\}.
       		\]
       		Define $\psi(x)$ such that $\psi(x) = 0$ if $x \geq 0$, and $\psi(x) = 1$ if $x < 0$. Then for any $a = (a_1, \dots, a_h) \in A(v)$, the vector $\chi(a) = \left( \psi(a_1), \dots, \psi(a_h) \right)$ satisfies
       		\[
       		\bigl| \{ Nv \} - \chi \bigr| < \epsilon
       		\]
       		for infinitely many $N \in \mathbb{N}$. Moreover, this set $A(v)$ possesses the property: If $v\in \mathbb{R}^{h}\setminus \mathbb{Q}^{h}$, then
$\dim V\geq1, 0\notin A(v)\subset V, A(v)=-A(v)$ and $A(v)$ is open in $V$.
       		\label{thm:index jump 2}
       	\end{thm}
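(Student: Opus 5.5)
The statement is a Kronecker-type density result: the natural route is to treat the orbit closure $H$ as a closed subgroup of the torus, whose identity component is $\pi(V)$. Each $a\in A(v)$ then gives points $\pi(ta)\in H$, $t>0$ small, lying just off the origin in a direction whose sign pattern is $\chi(a)$.

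\emph{Step 1: structure of $H$.} I would first show that $H$ is a closed subgroup of $T^h$. The orbit $\{\pi(mv)\mid m\in\mathbb{N}\}$ is closed under addition, so its closure $H$ is a closed subsemigroup of the compact group $T^h$. In a compact group a closed subsemigroup is a subgroup: for $x\in H$, the powers $mx$ accumulate, so $0$ and $-x$ are limits of elements $mx$. Consequently $\pi^{-1}H$ is a closed subgroup of $\mathbb{R}^h$ containing $\mathbb{Z}^h$. By the structure theorem for closed subgroups of $\mathbb{R}^h$, it is a direct sum of a linear subspace and a discrete group, and that subspace is exactly $V=T_0\pi^{-1}H$. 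In particular $V\subset\pi^{-1}H$, so $\pi(x)\in H$ for every $x\in V$.

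\emph{Step 2: approximation.} Fix $a\in A(v)$ and $\epsilon$. Let $q$ be a common denominator of the rational coordinates $v_k$, and set
$$\eta<\min\{\epsilon/2,\;1/(2q),\;\tfrac12\min_{a_k\neq0}t|a_k|\},$$
where $t>0$ is chosen so small that $t|a|<\epsilon/2$. Since $ta\in V$, Step 1 gives $\pi(ta)\in H$, so there is $N\in\mathbb{N}$ with $Nv\equiv ta+e \pmod{\mathbb{Z}^h}$ and $|e|<\eta$. I would then check the coordinates one at a time.
\begin{itemize}
\item If $a_k>0$, then $\{Nv_k\}=ta_k+e_k\in(0,\epsilon)$, which is close to $0=\psi(a_k)$.
\item If $a_k<0$, then $\{Nv_k\}=1+ta_k+e_k$, which is within $\epsilon$ of $1=\psi(a_k)$. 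The choice of $\eta$ prevents any sign flip.
\item If $a_k=0$, the definition of $A(v)$ forces $v_k\in\mathbb{Q}$. Then $Nv_k \bmod 1$ lies in $\frac1q\mathbb{Z}/\mathbb{Z}$ and is within $\eta<1/(2q)$ of $0$, so $\{Nv_k\}=0=\psi(0)$ exactly.
\end{itemize}
Hence $|\{Nv\}-\chi(a)|<\epsilon$.

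\emph{Step 3: infinitely many $N$.} I expect this to be the main obstacle, since Step 2 alone yields only one $N$. I would vary $t$ over a sequence $t_j\downarrow0$ with the corresponding $\eta_j\to0$. If only finitely many $N$ arose, then some fixed $N$ would satisfy $\{Nv\}\equiv t_ja+e_j$ for infinitely many $j$. If $a_k\neq0$ for some irrational coordinate, the $k$-th coordinate tends to $0$ but is positive in absolute value, which is a contradiction. Such a coordinate always exists when $v\notin\mathbb{Q}^h$, because $a\in A(v)$ has every irrational coordinate nonzero. If instead $v\in\mathbb{Q}^h$, then $\chi(a)$ is attained exactly, and every $N$ in the infinite set $q\mathbb{N}$ works.

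\emph{Step 4: properties of $A(v)$.}
\begin{itemize}
\item If some $v_k\notin\mathbb{Q}$, the orbit is infinite, so $H$ is an infinite closed subgroup of $T^h$. Its identity component is then positive-dimensional, which gives $\dim V\ge1$.
\item $0\notin A(v)$, since $0$ lies on the removed hyperplane $\{x_k=0\}$ for that $k$.
\item $A(v)=-A(v)$, because $V$ and each removed hyperplane are symmetric under $x\mapsto -x$.
\item $A(v)$ is open in $V$, being $V$ minus finitely many closed hyperplanes.
\end{itemize}
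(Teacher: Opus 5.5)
Your proof is correct and follows essentially the same route as the source the paper relies on. The paper gives no proof of its own and only cites Theorem 4.2 of Long--Zhu. Your route: $H$ is a closed subgroup of $T^h$, and $\pi^{-1}H$ splits as $V$ plus a discrete group, so $\pi(ta)\in H$. Approximating $\pi(ta)$ for small $t>0$ then produces the sign pattern $\chi(a)$, with the rational coordinates forced to vanish exactly. One point should be stated explicitly: for $v\in\mathbb{Q}^h$ the group $H$ is finite, so $V=A(v)=\{0\}$ and $\chi(a)=0$, and this is what justifies that every $N\in q\mathbb{N}$ works.
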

\begin{rem}  Given $M_{0}\in \mathbb{N}$, by the proof of Theorem 4.1 of \cite{lonzhu2002}, we may
further require $M_{0}|N$ (since the closure of the set $\{\{Nv\} : N\in \mathbb{N},M_{0}|N\}$)
is still a closed additive subgroup of $T^{h}$ for some $h\in\mathbb{N}$. Then we can use the
step 2 in Theorem 4.1 of of \cite{lonzhu2002} to get $N$), hence in our choice of $(N,m_1,...,m_q)$
in Theorem \ref{thm:index jump 1}, we can choose $M_0$ good enough such that $N\in\mathbb{N}$ further
satisfies
\begin{equation}
\frac{N}{M\hat{i}(\gamma_{k},1)}\in \mathbb{Z}, \ \ \text{for}\ \forall\  \hat{i}(\gamma_{k},1)\in \mathbb{Q}, \ \
k\in\{1,\ldots,q\}.\label{3.47}
\end{equation}
Furthermore, from (\ref{3.36}), we get $\hat{i}(\gamma_{k},1)\in \mathbb{Q}$ implies $\chi_{k}(a)=\psi(a_{k})=0$.
\end{rem}  		
       		In the following, the Maslov-type index According to \cite{lonzhu2002}, for each closed characteristics \( (\tau, x) \in \widetilde{\mathcal{J}}(\Sigma, \alpha) \), $\gamma_x(\tau)$ is symplectic similar to a matrix with the form $N_1(1,1)\diamond M$ with
       		$M \in \mathrm{Sp}(2n - 2)$, $\hat{i}(x) > 2$ and there is a subset $\mathcal{V}_\infty(\Sigma, \alpha) = \{(\tau_j, x_j)\,|\,j = 1, \cdots, q\} \subset \widetilde{\mathcal{J}}(\Sigma, \alpha) $ such that Theorem \ref{thm:index jump 1} can be
       		applied to it. Thus, we can obtain a correspondence between closed characteristics and some given integers.
       		
       		\begin{thm}(cf.\cite{lonzhu2002})
       			For some $a \in A(v)$, let $(N, m_1, \cdots, m_q) \in \mathbb{N}^{q+1}$ be given in Theorem \ref{thm:index jump 1} and \ref{thm:index jump 2}. Define $\varrho_n(\Sigma): \mathcal{H}(2n) \to \mathbb{N} \cup \{+\infty\}$ by
       			\[
       			\varrho_n(\Sigma)=
       			\begin{cases}
       				+\infty, & \text{if } ^{\#}\mathcal{V}_\infty(\Sigma,\alpha)=+\infty, \\
       				\min\left\{ \frac{\hat{i}(x,1)+2S^+(x)-\nu(x,1)+n}{2} \mid (\tau,x) \in \mathcal{V}_\infty(\Sigma,\alpha) \right\}, & \text{if } ^{\#}\mathcal{V}_\infty(\Sigma,\alpha)<+\infty.
       			\end{cases}
       			\]
       	Then for $s=1, \ldots, \varrho_n(\Sigma)$, $x_{j(s)}$ are geometric different, thus there are at least $\varrho_n(\Sigma)\geq\lfloor\frac{n}{2}\rfloor+1$ closed characteristics. Moreover, at least $\varrho_n(\Sigma)-1\geq[\frac{n}{2}]$ among them have irrational mean index.
       Also, for each $s = 1, \ldots, \varrho_n(\Sigma)$, there exists a unique $j(s) \in \{1, \ldots, q\}$ and
       			an injection map $p: \mathbb{N} + K \to \mathcal{V}_\infty(\Sigma, \alpha) \times \mathbb{N}$ for some $K \geq 0$, such that
       			$p(N - s + 1) = ((\tau_{j(s)}, x_{j(s)}), 2m_{j(s)})$ and
       			\[
       			i(x_{j(s)}, 2m_{j(s)}) \leq 2N - 2s + n \leq i(x_{j(s)}, 2m_{j(s)}) + \nu(x_{j(s)}, 2m_{j(s)}) - 1.
       			\]
       			Moreover, for any $s_1, s_2 \in \{1, \ldots, \varrho_n(\Sigma)\}$ with $s_1 < s_2$, we have:
       			\[
       			\hat{i}(x_{j(s_2)}, 2m_{j(s_2)}) < \hat{i}(x_{j(s_1)}, 2m_{j(s_1)}).
       			\]
and if all the closed characteristics in non-degenerate, then $\varrho_n(\Sigma)=n$.
       			\label{thm:index jump 3}
       		\end{thm}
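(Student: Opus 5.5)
The plan is to combine the variational (Ekeland–Hofer/Clarke dual action) description of closed characteristics with the common index jump estimates of Theorem \ref{thm:index jump 1}. First I would take as given the classical injection: since $H_\alpha$ is convex with $\alpha\in(1,2)$, the $S^1$-equivariant minimax values $c_1<c_2<\cdots$ of the dual action functional yield an injection map $p:\mathbb{N}+K\to\mathcal{V}_\infty(\Sigma,\alpha)\times\mathbb{N}$. It has the property that if $p(k)=((\tau,x),m)$, then
\[
i(x,m)\le 2k-2+n\le i(x,m)+\nu(x,m)-1 .
\]
Moreover, $k_1<k_2$ implies a strict inequality of the corresponding mean indices $\hat i(x,m)$, because the critical values are proportional to the (iterated) periods, and hence to $\hat i(x,m)$ up to a fixed normalisation. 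This is Ekeland's index theory together with Liu–Long's identification of Ekeland's index with the Maslov-type index. I would also use that every $\gamma_x(\tau)\simeq N_1(1,1)\diamond M$, that $\hat i(x,1)>2$, and that $i(x,1)\ge n$.

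Second, I would derive from Theorem \ref{thm:index jump 1} and the precise iteration formula of Section \ref{sec2.2} the three standard jump estimates for each $k$:
\begin{equation*}
i(x_k,2m_k-m)+\nu(x_k,2m_k-m)\le 2N-\bigl(i(x_k,m)+2S^+(x_k)-\nu(x_k,m)\bigr),
\end{equation*}
\begin{equation*}
2N-\frac{e(\gamma_{x_k}(\tau_k))}{2}\le i(x_k,2m_k)\le 2N,\qquad i(x_k,2m_k+m)\ge 2N+i(x_k,m)\ \ (m\ge1).
\end{equation*}
These come from substituting $I(k,m_k)=N+\Delta_k$ and using (\ref{delta}) and (\ref{3.40}) to control the terms $E(\cdot)$ and $\varphi(\cdot)$. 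For the first estimate I would also use $\hat i(x_k,1)\ge$ the corresponding integer quantity and take $\liminf$ over $m$.

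Third, for $s=1,\dots,\varrho_n(\Sigma)$ I would locate $p(N-s+1)=((\tau_j,x_j),m)$. Its index condition reads $i(x_j,m)\le 2N-2s+n\le i(x_j,m)+\nu(x_j,m)-1$. If $m>2m_j$, the right-hand estimate gives $i(x_j,m)\ge 2N+n>2N-2s+n$, which is excluded. If $m<2m_j$, the left estimate together with the definition of $\varrho_n$ (and $\hat i(x_j,1)\ge i(x_j,1)$-type bounds) gives $i+\nu-1<2N-2\varrho_n+n\le 2N-2s+n$, which is also excluded. Hence $m=2m_j$. Since $p$ is injective and the iterate $2m_j$ is fixed for each $j$, distinct $s$ give distinct $j(s)$, so these are geometrically distinct; uniqueness of $j(s)$ follows similarly. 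The strict monotonicity $\hat i(x_{j(s_2)},2m_{j(s_2)})<\hat i(x_{j(s_1)},2m_{j(s_1)})$ is exactly the mean-index monotonicity of the injection.

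Finally, for the bound $\varrho_n\ge\lfloor n/2\rfloor+1$ I would use $\hat i>2$, $S^+\ge 1$ (from the $N_1(1,1)$ block) and $\nu(x,1)\le 2n-1$. For the irrationality count, suppose two of the $x_{j(s)}$ had rational mean index. Then by (\ref{3.47}) and $\chi_k=0$ we get $2m_{j}\hat i(x_j,1)=2N$ for both, contradicting the strict inequality, so at most one is rational. In the non-degenerate case $\nu(x,1)=1$ and $S^+(x)=1$, and the expected $\varrho_n=n$ should follow from combining $\hat i>2$ with non-degeneracy. I expect this last step, and more generally the sharp left-side estimate making $m<2m_j$ impossible for all $s\le\varrho_n$, to be the main obstacle. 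There one has to track the $S^+$ and nullity contributions exactly, not merely up to $O(1)$.
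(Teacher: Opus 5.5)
The paper gives no proof of this theorem; it quotes it from Long--Zhu. Your skeleton is the one they use: the injection map $p$, the common index jump theorem, forcing the iterate in slot $N-s+1$ to be $2m_j$, injectivity of $p$ for geometric distinctness, and (\ref{3.47}) for rational mean indices. As written, however, the proposal has a genuine gap, and the bounds on $\varrho_n$ are not obtained.

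\textbf{The gap: strict ordering of mean indices.} Your ``at most one rational mean index'' argument depends on it.
\begin{itemize}
\item Strict monotonicity of the minimax values $c_k$ (finitely many orbits) orders the \emph{actions} $m\tau$ of the realizing iterates. For iterates of a single orbit it gives $m_1<m_2$.
\item It does not order $\hat i(x,m)=m\tau\cdot\hat i(x,1)/\tau$ across different orbits, because $\hat i(x,1)/\tau$ depends on $x$ for a general convex $\Sigma$.
\item So ``and hence to $\hat i(x,m)$ up to a fixed normalisation'' is exactly what must be proved. You need that $\rho_x=\tau_x/\hat i(x,1)$ is the same for all orbits occupying infinitely many slots of $p$; for $N$ large the $x_{j(s)}$ are among these.
\end{itemize}
This can be done with your own data:
\begin{itemize}
\item The index inequality, together with $\hat i-n\le i$ and $i+\nu\le\hat i+n$, gives $2k-1\le\hat i(x,m)\le 2k+2n-2$ whenever $p(k)=(x,m)$. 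Hence the action in slot $k$ is $\rho_x(2k+O(n))$.
\item Once an orbit of maximal ratio sits in a large slot $k_0$, increasing actions exclude every orbit of smaller ratio from slot $k_0+1$, and inductively from all later slots.
\item This contradicts the smaller-ratio orbit occupying infinitely many slots.
\end{itemize}
Only after this does action order become mean-index order.

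\textbf{The bounds on $\varrho_n$.}
\begin{itemize}
\item Your inputs $\hat i>2$, $S^+\ge1$, $\nu(x,1)\le 2n-1$ only give $\hat i+2S^+-\nu+n>5-n$, which is useless.
\item The printed definition is a misprint. Long--Zhu use $\bigl[\frac{i(x,1)+2S^+(x)-\nu(x,1)+n}{2}\bigr]$, and your step $s\le\varrho_n\Rightarrow 2s\le i(x_j,1)+2S^+(x_j)-\nu(x_j,1)+n$ needs exactly this form.
\item Your substitute ``$\hat i(x,1)\ge i(x,1)$'' is false in general: by (\ref{formula:mean index}), $\hat i-i=p_-+p_0-r+\sum\theta_l/\pi$, which can be negative.
\item The correct ingredients are $i(x,1)\ge n$ and the coupling $2S^+(x)-\nu(x,1)=p_--p_+\ge 2-n$ read off the normal form. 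These give $\varrho_n\ge\bigl[\frac{n+2}{2}\bigr]$.
\item In the non-degenerate case, $S^+=\nu(x,1)=1$ gives $\varrho_n\ge n$.
\item The reverse inequality $\varrho_n\le n$ comes from $i(x_{j(s)},2m_{j(s)})\ge 2N-\frac{e}{2}\ge 2N-n$ set against $i(x_{j(s)},2m_{j(s)})\le 2N-2s+n$. The bound $\hat i>2$ plays no role.
\end{itemize}

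\textbf{What the jump theorem actually provides.} It gives identities only at $2m_k\pm1$:
\begin{itemize}
\item $i(x_k,2m_k-1)+\nu(x_k,2m_k-1)=2N-(i(x_k,1)+2S^+(x_k)-\nu(x_k,1))$;
\item $i(x_k,2m_k+1)=2N+i(x_k,1)$;
\item at $2m_k$, the bounds $i\ge 2N-\frac e2$ and $i+\nu\le 2N+\frac e2$.
\end{itemize}
Your version for general $m$, and the bound $i(x_k,2m_k)\le 2N$, are not part of it. You do not need them. With $i(x,1)\ge n$, the formulas of Section 2.2 show that $m\mapsto i(x,m)$ and $m\mapsto i(x,m)+\nu(x,m)$ are increasing. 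Together with the identities at $2m_k\pm1$, this excludes $m\neq 2m_j$ precisely when $2s\le i(x_j,1)+2S^+(x_j)-\nu(x_j,1)+n$.
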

Based on this Theorem, we can further obtain following lemma.
\begin{lem}\label{lem choose a}
We can choose an $\hat{a}\in A(v)$ such that $\chi_{j(s)}(\hat{a})=1$ for some $s=1,...,\varrho_{n}(\Sigma)$.
\end{lem}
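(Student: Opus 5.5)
The plan is to use the symmetry $A(v)=-A(v)$ from Theorem \ref{thm:index jump 2}, together with the irrationality of mean indices from Theorem \ref{thm:index jump 3}. Recall that $\chi_k(a)=\psi(a_k)$, so $\chi_k(a)=1$ exactly when $a_k<0$.

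First, fix any $a\in A(v)$ and let $(N,m_1,\dots,m_q)$ and the indices $j(s)$, $1\le s\le \varrho_n(\Sigma)$, be produced by Theorems \ref{thm:index jump 1}--\ref{thm:index jump 3}. By Theorem \ref{thm:index jump 3}, at least $\varrho_n(\Sigma)-1\ge[\frac n2]\ge 1$ of the $x_{j(s)}$ have irrational mean index $\hat{i}(x_{j(s)},1)$. For such an $s$ the coordinate $v_{j(s)}=1/\hat{i}(x_{j(s)},1)$ of the vector $v$ in (\ref{3.35}) is irrational. By the definition of $A(v)$, every $b\in A(v)$ therefore satisfies $b_{j(s)}\neq 0$. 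So the irrational-index coordinates of $a$ are strictly positive or strictly negative, never zero. If $a_{j(s)}<0$ for one such $s$, then $\chi_{j(s)}(a)=1$ and we take $\hat a=a$.

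Otherwise $a_{j(s)}>0$ for every $s$ with $\hat{i}(x_{j(s)},1)\notin\mathbb{Q}$. We pass to $-a$, which lies in $A(v)$ and satisfies $(-a)_k<0$ for all these $k$. If the family $\{j(s)\}$ were independent of the choice of $a$, we would be done with $\hat a=-a$.

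The main obstacle is that $j(s)$ depends on $a$: the pair $(N,m_k)$, and hence the assignment $p(N-s+1)$, changes when $a$ is replaced by $-a$. Write $j_{\pm}(s)$ for the indices attached to $\pm a$. I would argue by contradiction. Suppose that for both $a$ and $-a$, every irrational-index $x_{j_\pm(s)}$ has $\chi=0$. For irrational $\hat{i}(x_k,1)$, the condition $\chi_k=0$ means $\{N/\hat{i}(x_k,1)\}<\epsilon$, while $\chi_k=1$ means it is $>1-\epsilon$. So $\chi_k$ records on which side of $N$ the jump of the iterated index $i(x_k,2m_k)$ lies. Under $a\mapsto -a$, every irrational coordinate flips, so each irrational $x_k$ selected for $-a$ was, for $a$, on the opposite side ($\chi_k(a)=1$). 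Using the index window $i(x_{j(s)},2m_{j(s)})\le 2N-2s+n\le i+\nu-1$ and the strict ordering of $\hat{i}(x_{j(s)},2m_{j(s)})$ in Theorem \ref{thm:index jump 3}, I would count how many iterates fall in the range $[2N-2\varrho_n(\Sigma)+n,\,2N+n-2]$ for $a$ and for $-a$. The assumption forces all selected irrational characteristics to jump on the same side in both configurations. This should be incompatible with the injectivity of $p$ and the fact that exactly $\varrho_n(\Sigma)$ slots must be filled.

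Carrying out this counting, via the iteration formula (\ref{formula:mean index}) and the relation $I(k,m_k)=N+\Delta_k$, is the step I expect to need the most care. Once the contradiction is reached, one of $a$ or $-a$ serves as $\hat a$.
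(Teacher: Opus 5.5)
Your first three paragraphs are exactly the paper's proof. It fixes $a$ and notes that $v_{j(s)}=1/\hat{i}(x_{j(s)},1)$ is irrational for at least $\varrho_n(\Sigma)-1\ge 1$ values of $s$, so $a_{j(s)}\neq 0$ there. It then takes $\hat a=a$ or $\hat a=-a$ according to the sign of $a_{j(s)}$, and stops: it reads $\chi_{j(s)}(-a)=\psi(-a_{j(s)})=1$ with $j(s)$ still the index produced from $a$.

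You are right that this is not obviously the statement needed later. Lemma~\ref{lem choose a 2} feeds the conclusion into iii) of Lemma~\ref{lem:Hu-Ou 2}. That inequality concerns the indices and iterates $2\tilde m_{\tilde j(s)}$ generated by $\hat a$ itself, with its own integer $\tilde N$.

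The gap is that your repair is not yet an argument, and the tools you list cannot supply it.
\begin{itemize}
\item The configurations for $a$ and $-a$ come with different integers $N\neq\tilde N$. Injectivity of $p$ therefore says nothing linking the slots $N-s+1$ and $\tilde N-s+1$.
\item Within one configuration, nothing forbids an iterate $(x_k,2m_k)$ with $\chi_k=1$ from being left unselected. The map $p$ is only an injection, and Theorem~\ref{thm:index jump 3} constrains only the selected iterates.
\item Your hypothesis that every selected irrational characteristic has $\chi=0$ for both $a$ and $-a$ is not contradictory by itself. It only says the selected irrational ones come from $\{k: a_k>0\}$ in one case and from $\{k: a_k<0\}$ in the other. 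These are disjoint sets, and each may well contain $\varrho_n(\Sigma)-1$ elements.
\end{itemize}

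The real obstruction is clearer in another form. By iii) of Lemma~\ref{lem:Hu-Ou 2} and Theorem~\ref{lem:Hu-Ou 3}(2), the lemma fails for a given $\hat a$ exactly when the top slot $p(N)=(x_{j(1)},2m_{j(1)})$ is occupied by a characteristic with rational mean index. What must be excluded is that this happens both for $a$ and for $-a$, or indeed for every $a\in A(v)$.

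None of your ingredients touches this. The index window, the relation $I(k,m_k)=N+\Delta_k$, and the strict ordering of $\hat{i}(x_{j(s)},2m_{j(s)})$ are all consistent with it: the ordering merely places a rational characteristic, at mean index exactly $2N$, between the $\chi=1$ and $\chi=0$ ones. So the counting step you postpone is the entire content of the lemma in the form you correctly want, and it remains unproved.
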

\begin{proof}
Take $a\in A(v)$, from the definition of $A(v)$,  one can see that for any $i=1,...h$, if $v_{i}\in\mathbb{R}\setminus \mathbb{Q}$, then $a_{i}\neq 0$.
Since $v$ is given by (\ref{3.35}),  then $v_{i}=1/\hat{i}(\gamma_{i, 1})$ for $i=1,...,q$, this combine with Theorem \ref{thm:index jump 3} tells
us that at least $\varrho_n(\Sigma)-1\geq[\frac{n}{2}]$ of $v_{j(s)}, s=1,...,\varrho_n(\Sigma)$ are irrational number. Therefore
there exist at least $\varrho_n(\Sigma)-1\geq[\frac{n}{2}]$ of $a_{j(s)}, s=1,...,\varrho_n(\Sigma)$ are non-zero. If $a_{j(s)}<0$ for some $s\in\{1,...,\varrho_n(\Sigma)\}$,
then $\chi_{j(s)}(a)=1$, take $\hat{a}=a$, such $\hat{a}$ is found. If $a_{j(s)}>0$ for some $s\in\{1,...,\varrho_n(\Sigma)\}$, since $A(v)=-A(v)$, we can take $\hat{a}=-a\in A(v)$, then $\hat{a}_{j(s)}<0$, hence $\chi_{j(s)}(\hat{a})=1$, this complete the proof.
\end{proof}
%\begin{lem}\label{lem choose a2}
%We can choose an $\hat{a}\in A(v)$ such that $\chi_{j(s)}(\hat{a})=0$ for some $s=1,...,\varrho_{n}(\Sigma)$.
%\end{lem}
%\begin{proof}
%The proof is similar to that of Lemma \ref{lem choose a} and is omitted here, leaving it to the reader.
%\end{proof}
\section{The elliptic closed characteristics}
Based on Theorems \ref{thm:index jump 1} and Theorem \ref{thm:index jump 3}, Hu-Ou \cite{huou2017} give
some inequalities below. These inequalities are useful in our proof of the main Theorems.
%	Before presenting the proof of Theorem \ref{thm:1-elliptic}, we first introduce the lemmas required for the argument, all of which were established in \cite{huou2017}.
		\begin{lem}(cf. Thm 2.7 of \cite{huou2017})
            For any $s \in \{1, \dots, \varrho_n(\Sigma)\}$, we have
			\begin{equation}
			2s \geq n + S_{M_{j(s)}}^+(1) + C(M_{j(s)}) - 2\Delta_{j(s)} - \nu(x_{j(s)}, 2m_{j(s)}) + 1,
			\label{formula:2s more}
			\end{equation}
			\begin{equation}
			2s \leq n + S_{M_{j(s)}}^+(1) + C(M_{j(s)}) - 2\Delta_{j(s)}.
			\label{formula:2s less}
		    \end{equation}
            \label{lem:Hu-Ou 1}
		\end{lem}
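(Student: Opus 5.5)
The plan is to combine the two sides of the index sandwich in Theorem \ref{thm:index jump 3} with the precise iteration formulas of Section 2.2, evaluated at the even iterate $2m_{j(s)}$ coming from the common index jump Theorem \ref{thm:index jump 1}. Write $x=x_{j(s)}$, $m=m_{j(s)}$, $M=M_{j(s)}$, and put $M$ in the normal form (\ref{formula:decomposition of matrices}). The key identity is
\[
i(x,2m)=2N-\bigl(S_M^+(1)+C(M)-2\Delta_{j(s)}\bigr),
\]
obtained by substituting $2m$ into the Long--Zhu iteration formula and comparing with $I(j(s),m)=N+\Delta_{j(s)}$. Two ingredients make this work. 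First, the mean index formula (\ref{formula:mean index}) converts $2m(i(\gamma,1)+S^+_M(1)-C(M))$ plus $2\sum E(2m\theta/2\pi)S^-$ into $2I(j(s),m)$, up to the fractional corrections. Second, (\ref{delta}) and (\ref{3.40}) control those corrections: each angle contributes $E(m\theta/\pi)$ with $\{m\theta/\pi\}$ either $0$ or within $\delta$ of $0$ or $1$. The terms with $0<\{m\theta/\pi\}<\delta$ are exactly those collected in $\Delta_{j(s)}$, and the $(-1)^{2m}=1$ terms give $q_0+q_+$, which matches $C(M)$ in (\ref{formula:C(M)}). I also need $2m\hat{i}(\gamma,1)$ to be close to the integer $2N$, which follows from (\ref{3.36}) with $\chi_{j(s)}\in\{0,1\}$ and (\ref{3.47}) in the rational case.

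For the inequality (\ref{formula:2s less}), I insert this identity into the left half of the sandwich, $i(x,2m)\le 2N-2s+n$. That gives $2N-(S^+_M(1)+C(M)-2\Delta)\le 2N-2s+n$, which is exactly $2s\le n+S^+_M(1)+C(M)-2\Delta_{j(s)}$.

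For the inequality (\ref{formula:2s more}), I use the right half, $2N-2s+n\le i(x,2m)+\nu(x,2m)-1$. Substituting the identity again yields $2s\ge n+S^+_M(1)+C(M)-2\Delta_{j(s)}-\nu(x,2m)+1$.

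The main obstacle is the identity itself, and specifically the rounding of $E(2m\theta/2\pi)=E(m\theta/\pi)$ for each eigenvalue angle. I have to check case by case that:
\begin{itemize}
\item angles with $\{m\theta/\pi\}$ slightly above $0$ produce the extra $+1$ that is accounted for by $2\Delta$;
\item angles with $\{m\theta/\pi\}$ slightly below $1$, or equal to $0$, produce no discrepancy.
\end{itemize}
I also have to match the contributions of the $N_2$ blocks through the $\varphi$ terms, using that $\varphi(m\alpha/\pi)=0$ when $m\alpha/\pi\in\mathbb{N}$, and absorb the choice $\chi_{j(s)}=\psi(a_{j(s)})$ into the definition of $N$. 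I would follow Long--Zhu's proof of their Theorem 4.3 closely to organize this bookkeeping.
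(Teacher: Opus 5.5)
Your proposal is correct and is essentially the argument behind the cited Hu--Ou result: the identity $i(x_{j(s)},2m_{j(s)})=2N-\bigl(S^+_{M_{j(s)}}(1)+C(M_{j(s)})-2\Delta_{j(s)}\bigr)$ from Long--Zhu's proof of the common index jump theorem, inserted into the two halves of the sandwich in Theorem \ref{thm:index jump 3}, yields exactly the two inequalities. The case-by-case rounding you flag as the main obstacle is needed only if you re-prove Theorem \ref{thm:index jump 1}; if $I(k,m_k)$ is defined with $i(\gamma_k,1)$ (the $\hat{i}$ in the paper's display must be a typo, since otherwise $I(k,m_k)=N+\Delta_k$ fails when the mean index is irrational), then substituting $2m$ into the iteration formula gives $i(\gamma,2m)=2I(j(s),m)-\bigl(S^+_M(1)+C(M)\bigr)$ exactly, and the identity follows in one line.
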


	\begin{lem}(cf. Cor 2.8 of \cite{huou2017})
			For any $s_1, s_2 \in \{1, \dots, \varrho_n(\Sigma)\}$ with $s_1 < s_2$,

i) If $D_{j(s_{2})}\in\mathbb{Q}$, then $\chi_{j(s_{2})}(a)=0$ and
\bea \nonumber
N=\left(\left[\frac{N}{MD_{j(s_{2})}}\right]+\chi_{j(s_{2})}(a)\right)MD_{j(s_{2})}
&<&\left(\left[\frac{N}{MD_{j(s_{1})}}\right]+\chi_{j(s_{1})}(a)\right)MD_{j(s_{1})}
\eea
with $D_{j(s_{1})}\in\mathbb{R}\setminus\mathbb{Q}$, $\chi_{j(s_{1})}(a)=1$.

ii) If $\chi_{j(s_{2})}(a)=1$, then $D_{j(s_{2})}\in\mathbb{R}\setminus\mathbb{Q}$ and
\bea \nonumber
N<\left(\left[\frac{N}{MD_{j(s_{2})}}\right]+\chi_{j(s_{2})}(a)\right)MD_{j(s_{2})}
&<&\left(\left[\frac{N}{MD_{j(s_{1})}}\right]+\chi_{j(s_{1})}(a)\right)MD_{j(s_{1})}
\eea
with $D_{j(s_{1})}\in\mathbb{R}\setminus\mathbb{Q}$, $\chi_{j(s_{1})}(a)=1$.

iii)  \bea  \chi_{j(s_{2})}(a)\leq \chi_{j(s_{1})}(a).\nonumber\eea
\label{lem:Hu-Ou 2}
	\end{lem}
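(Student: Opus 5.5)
The plan is to reduce everything to the monotonicity statement at the end of Theorem \ref{thm:index jump 3}, using the explicit form of the iterates $m_k$ produced in the proof of Theorem \ref{thm:index jump 1}. Throughout I write $D_k=\hat{i}(x_k,1)$. I take $M$ to be the integer fixed in the Long-Zhu construction, so that (\ref{3.47}) holds. I also recall from the proof of Theorem 4.1 in \cite{lonzhu2002} that the iterates are given by
$$m_k=\left(\left[\frac{N}{MD_k}\right]+\chi_k(a)\right)M,\qquad k=1,\dots,q.$$
Since the mean index is homogeneous, $\hat{i}(x_k,2m_k)=2m_kD_k$. Hence the quantity
$$Q_k:=\left(\left[\frac{N}{MD_k}\right]+\chi_k(a)\right)MD_k$$
equals $\frac12\hat{i}(x_k,2m_k)$. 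Theorem \ref{thm:index jump 3} then gives $Q_{j(s_2)}<Q_{j(s_1)}$ whenever $s_1<s_2$. This identification is the key step, and it is the one I would check most carefully against \cite{lonzhu2002}.

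Next I locate each $Q_k$ relative to $N$. There are three cases.
\begin{itemize}
\item If $D_k\in\mathbb{Q}$, the remark after Theorem \ref{thm:index jump 2} gives $\chi_k(a)=0$. By (\ref{3.47}), $N/(MD_k)$ is an integer, so $Q_k=N$.
\item If $D_k\notin\mathbb{Q}$, then $N/(MD_k)$ is not an integer, so $[N/(MD_k)]MD_k<N<([N/(MD_k)]+1)MD_k$. With $\chi_k(a)=0$ this gives $Q_k<N$.
\item If $D_k\notin\mathbb{Q}$ and $\chi_k(a)=1$, the same inequalities give $Q_k>N$.
\end{itemize}
Thus $Q_k>N$ holds exactly when $D_k$ is irrational and $\chi_k(a)=1$, and $Q_k=N$ holds exactly when $D_k$ is rational.

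Each part now follows from this trichotomy and the monotonicity.
\begin{itemize}
\item[(i)] Suppose $D_{j(s_2)}\in\mathbb{Q}$. Then $\chi_{j(s_2)}(a)=0$ and $Q_{j(s_2)}=N$, so $Q_{j(s_1)}>N$. By the trichotomy, $D_{j(s_1)}$ is irrational and $\chi_{j(s_1)}(a)=1$.
\item[(ii)] Suppose $\chi_{j(s_2)}(a)=1$. A rational $D_{j(s_2)}$ would force $\chi_{j(s_2)}(a)=0$, so $D_{j(s_2)}$ is irrational and $Q_{j(s_2)}>N$. Monotonicity gives $Q_{j(s_1)}>Q_{j(s_2)}>N$, and the trichotomy again forces $D_{j(s_1)}\notin\mathbb{Q}$ and $\chi_{j(s_1)}(a)=1$.
\item[(iii)] If $\chi_{j(s_2)}(a)=0$, the inequality is trivial. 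Otherwise (ii) gives $\chi_{j(s_1)}(a)=1$.
\end{itemize}

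The only potential obstacle is the bookkeeping in the first paragraph. One must confirm that the same $M$ appears both in the formula for $m_k$ and in (\ref{3.47}), and that the $m_k$ of Theorem \ref{thm:index jump 1} are precisely those whose doubles appear in Theorem \ref{thm:index jump 3}.
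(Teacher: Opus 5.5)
Your proof is correct and follows essentially the argument behind the cited Corollary 2.8 of Hu--Ou; the paper itself only cites that result. The formula $m_k=\bigl([N/(MD_k)]+\chi_k(a)\bigr)M$, which the paper also uses in the proof of Theorem~\ref{thm delta1}, makes the quantities in the lemma equal to $\frac12\hat{i}(x_{j(s)},2m_{j(s)})$, and combining the strict monotonicity in Theorem~\ref{thm:index jump 3} with (\ref{3.47}), the fact that rational $D_k$ forces $\chi_k(a)=0$, and $D_k>0$ gives your trichotomy relative to $N$ and hence i)--iii).
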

Based on the $iii)$ of Lemma \ref{lem:Hu-Ou 2}, Lemma \ref{lem choose a}, we can easy to obtain below a simple but useful lemma.
\begin{lem}\label{lem choose a 2}
We can choose an $\hat{a}\in A(v)$ such that $\chi_{j(1)}(\hat{a})=1$.
\end{lem}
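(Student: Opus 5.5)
The plan is to combine Lemma \ref{lem choose a} with the monotonicity statement iii) of Lemma \ref{lem:Hu-Ou 2}. First, Lemma \ref{lem choose a} gives an $\hat{a}\in A(v)$ and some index $s_0\in\{1,\ldots,\varrho_n(\Sigma)\}$ with $\chi_{j(s_0)}(\hat{a})=1$. This uses only that at least $\varrho_n(\Sigma)-1\geq[\frac{n}{2}]\geq 1$ of the $v_{j(s)}$ are irrational, so that the corresponding $\hat a_{j(s)}\neq 0$, together with the symmetry $A(v)=-A(v)$.

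If $s_0=1$ there is nothing more to do. Otherwise $1<s_0$, and we apply iii) of Lemma \ref{lem:Hu-Ou 2} with $s_1=1$ and $s_2=s_0$, for the vector $a=\hat{a}$. This yields
\[
1=\chi_{j(s_0)}(\hat{a})\leq \chi_{j(1)}(\hat{a})\leq 1 .
\]
Hence $\chi_{j(1)}(\hat{a})=1$, which is the claim.

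The only point requiring care is consistency of data. Lemma \ref{lem:Hu-Ou 2} and Theorem \ref{thm:index jump 3} are stated for a fixed $a\in A(v)$ and the tuple $(N,m_1,\ldots,m_q)$ it generates. The assignment $s\mapsto j(s)$ may depend on that choice. So we must fix $\hat{a}$ first, then take the tuple $(N,m_k)$ and the map $s\mapsto j(s)$ produced from this same $\hat{a}$, and only then invoke iii).

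Lemma \ref{lem choose a} as stated produces $\hat a$ with $\chi_{j(s)}(\hat a)=1$ for some $s$, but the $j(s)$ there comes from the initial $a$. When we pass to $\hat a=-a$, the indexing $j(s)$ attached to $-a$ could differ. I expect this bookkeeping to be the main (and only) obstacle. I would handle it as follows. By the structure of Theorem \ref{thm:index jump 3}, the indices $j(s)$ for $\hat a$ still include at least $\varrho_n(\Sigma)-1\geq 1$ closed characteristics with irrational mean index. So the argument of Lemma \ref{lem choose a} can be rerun directly with the $j(s)$ attached to $\hat a$: if every such irrational coordinate $\hat a_{j(s)}$ were positive, then all $\chi_{j(s)}(\hat a)=0$, and we would replace $\hat a$ by $-\hat a$.

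To close this loop, I would argue as in \cite{huou2017}. By Lemma \ref{lem:Hu-Ou 2} i), a rational $D_{j(s_2)}$ with $s_2>1$ forces $\chi_{j(s_1)}(a)=1$ for every $s_1<s_2$. Combined with iii), for any admissible $a$ the vector $(\chi_{j(s)}(a))_s$ is nonincreasing in $s$. That vector vanishes identically only if all $D_{j(s)}$ with $s\geq 2$ are irrational and have $a_{j(s)}>0$. Flipping the sign of $a$ then gives a vector with $\chi_{j(s)}=1$ for some $s$, and monotonicity iii) pushes this value down to $s=1$.
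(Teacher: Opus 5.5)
Your first two paragraphs are exactly the paper's proof: take $\hat a$ from Lemma \ref{lem choose a} and push the value $1$ down to $s=1$ with iii) of Lemma \ref{lem:Hu-Ou 2}. The bookkeeping worry you raise is legitimate, and the paper does not address it either. In the proof of Lemma \ref{lem choose a}, when $\hat a=-a$ is taken, the indices $j(s)$ are still those produced by $a$. But iii) applied to $\hat a$ concerns the indices produced by $\hat a$, which come with a new $N$ and hence a new assignment $s\mapsto j(s)$. That this assignment genuinely changes is exactly what the paper's proof of Theorem \ref{thm:2-irr-elliptic} exploits, where $\tilde j(1)\neq j(1)$ for $\tilde a=-\hat a$.

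Your repair, however, reproduces the same gap instead of closing it. In your last paragraph, after flipping $a\mapsto -a$, the assertion ``$\chi_{j(s)}(-a)=1$ for some $s$'' holds for the indices $j_a(s)$ attached to $a$. Monotonicity iii) for $-a$ only compares the values $\chi_{j_{-a}(s)}(-a)$, and nothing you invoke relates $\{j_a(s)\}$ to $\{j_{-a}(s)\}$.

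Concretely, the scenario $\chi_{j_a(1)}(a)=0=\chi_{j_{-a}(1)}(-a)$ is not excluded. By i), iii) of Lemma \ref{lem:Hu-Ou 2} and Theorem \ref{lem:Hu-Ou 3}-(2), it only forces two things: $x_{j_a(1)}$ and $x_{j_{-a}(1)}$ have rational mean index, and $\{j_a(s)\}_{s\ge 2}$, $\{j_{-a}(s)\}_{s\ge 2}$ are disjoint sets of irrational-index characteristics on which $a$ is positive, resp. negative. That gives at least $2\varrho_n(\Sigma)-1$ geometrically distinct closed characteristics. This is compatible with $^{\#}\widetilde{\mathcal{J}}(\Sigma)<+\infty$ and would only be ruled out by a counting hypothesis like $^{\#}\widetilde{\mathcal{J}}(\Sigma)\le 2\varrho_n(\Sigma)-2$. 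Passing to yet another element of $A(v)$ changes $N$ and the $j(s)$ again, so iterating does not help.

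The missing ingredient is a mechanism linking the index assignment for $-a$ (or any other $a'$) to the one for $a$. Without it, neither your argument nor the paper's two-line proof establishes the lemma in the form used later.

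For Theorem \ref{thm:2-irr-elliptic} the lemma can in fact be bypassed. If $\chi_{j(1)}(a)=0$ for the given $a$, then i) and iii) of Lemma \ref{lem:Hu-Ou 2} give $\hat i(x_{j(2)},1)\notin\mathbb{Q}$ and $\chi_{j(2)}(a)=0$, so Theorem \ref{lem:Hu-Ou 3}-(3) supplies a second elliptic characteristic. If $\chi_{j(1)}(a)=1$, argue with $-a$ as the paper does.
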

\begin{proof}
From Lemma \ref{lem choose a}, we can choose an $\hat{a}\in A(v)$ such that $\chi_{j(s)}(\hat{a})=1$ for some $s=1,...,\varrho_{n}(\Sigma)$.
Since $iii)$ of Lemma \ref{lem:Hu-Ou 2} implies that $1=\chi_{j(s)}(\hat{a})\leq \chi_{j(1)}(\hat{a})$, hence $\chi_{j(1)}(\hat{a})=1$.
\end{proof}
%\begin{lem}\label{lem choose a 3}
%We can choose an $\hat{a}\in A(v)$ such that $\chi_{j(\varrho_n(\Sigma))}(\hat{a})=0$.
%\end{lem}
%\begin{proof}
%From Lemma \ref{lem choose a2}, we can choose an $\hat{a}\in A(v)$ such that $\chi_{j(s)}(\hat{a})=0$ for some $s=1,...,\varrho_{n}(\Sigma)$.
%Since $iii)$ of Lemma \ref{lem:Hu-Ou 2} implies that $\chi_{j(\varrho_n(\Sigma))}(\hat{a})\leq \chi_{j(s)}(\hat{a})=0$, hence $\chi_{j(\varrho_n(\Sigma))}(\hat{a})=0$.
%\end{proof}
We must mention that the variable $\Delta_{j(s)}$ in Lemma \ref{lem:Hu-Ou 1} is important in the analysis of the ellipticity of the closed characteristics. In the following, we present an non-trivial estimation for variable $\Delta_{j(s)}$.
\begin{thm}
For any $s \in \{1, \dots, \varrho_n(\Sigma)\}$, we have following estimation
\bea
\Delta_{j(s)}\leq r-\tilde{r}+r_{*}-\tilde{r}_{*}.\label{inequal Delta1}
\eea
Moreover, if $\chi_{j(s)}(a) = 0$ and $\hat{i}(x_{j(s)}, 1) \in \mathbb{R}\setminus\mathbb{Q}$, then we have
\bea
\Delta_{j(s)}\leq r-\tilde{r}-1+r_{*}-\tilde{r}_{*}.\label{inequal Delta}
\eea
\label{thm delta1}
\end{thm}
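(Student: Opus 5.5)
The plan is to bound $\Delta_{j(s)}$ by counting, one normal-form block at a time, which angles $\theta\in(0,2\pi)$ can contribute to
$$\Delta_{j(s)}=\sum_{0<\{m_{j(s)}\theta/\pi\}<\delta}S^-_{M_{j(s)}}(e^{\sqrt{-1}\theta}).$$
Write $k=j(s)$ and decompose $M_k=\gamma_{x_k}(\tau_k)$ as in (\ref{formula:decomposition of matrices}). By the splitting numbers in Section \ref{sec2.2}, $S^-_{M_k}(e^{\sqrt{-1}\theta})\neq0$ for $\theta\in(0,2\pi)$ only in the following cases:
\begin{itemize}
\item $\theta=\pi$, coming from the blocks $-I_{2q_0}$ and $N_1(-1,-1)^{\diamond q_+}$;
\item $\theta=\theta_l$, with contribution $1$ from each $R(\theta_l)$, and nothing at $2\pi-\theta_l$;
\item $\theta=\alpha_j$ and $\theta=2\pi-\alpha_j$, with contribution $1$ at each from each nontrivial $N_2(\omega_j,u_j)$, using $S^-(\bar\omega)=S^+(\omega)=1$;
\item trivial $N_2$ blocks contribute nothing.
\end{itemize}
This is consistent with (\ref{formula:C(M)}).

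First I would discard the angles that never contribute. For $\theta=\pi$, $m_k\theta/\pi=m_k\in\mathbb{N}$, so its fractional part is $0$ and it is not counted. For every rational $\theta/\pi$, (\ref{3.40}) gives $m_k\theta/\pi\in\mathbb{N}$, so these are not counted either. This removes the $\tilde r$ rational rotations and the $\tilde r_*$ rational nontrivial $N_2$ blocks. Each irrational $R(\theta_l)$ contributes at most $1$.

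For an irrational nontrivial $N_2(\omega_j,u_j)$, the two relevant fractional parts are $\{m_k\alpha_j/\pi\}$ and $\{m_k(2\pi-\alpha_j)/\pi\}=1-\{m_k\alpha_j/\pi\}$. Since $\delta<1/2$, at most one of them lies in $(0,\delta)$, so the block contributes at most $1$. Summing over blocks gives (\ref{inequal Delta1}).

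For (\ref{inequal Delta}) I would show that, when $\chi_k(a)=0$ and $\hat i(x_k,1)\notin\mathbb{Q}$, at least one irrational $R(\theta_l)$ has $\{m_k\theta_l/\pi\}\notin(0,\delta)$. By (\ref{formula:mean index}),
$$\hat i(x_k,1)=\text{integer}+\sum_{l=1}^r\frac{\theta_l}{\pi}.$$
Irrationality of $\hat i$ forces at least one irrational $\theta_l$ among the $R$-blocks, so $r-\tilde r\ge1$. Multiplying by $m_k$ and using that the rational $\theta_l$ give integers, we get
$$m_k\hat i(x_k,1)\equiv\sum_{\theta_l/\pi\notin\mathbb{Q}}\Big\{\frac{m_k\theta_l}{\pi}\Big\}\pmod 1.$$

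Next I use the Long--Zhu choice of $m_k$ from Theorems \ref{thm:index jump 1}, \ref{thm:index jump 2} and the accompanying remark, namely $m_k=\big(\big[\frac{N}{M\hat i(x_k,1)}\big]+\chi_k(a)\big)M$. With $\chi_k(a)=0$ and (\ref{3.36}) this gives $m_k\hat i(x_k,1)=N-\varepsilon$ with $0\le\varepsilon<M\hat i\,\epsilon$, which is small. In fact $\varepsilon>0$, because $m_k\hat i$ is irrational while $N$ is an integer. Hence $\{m_k\hat i(x_k,1)\}=1-\varepsilon$ is close to $1$.

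Now suppose, for contradiction, that every irrational $\theta_l$ satisfied $\{m_k\theta_l/\pi\}\in(0,\delta)$. Then the right-hand side of the congruence would be a number in $(0,r\delta)$. Choosing $\delta$ and $\epsilon$ small enough that $r\delta<1-\varepsilon$, this is incompatible with the left side having fractional part $1-\varepsilon$. So some irrational rotation angle contributes $0$, which improves the count by $1$ and gives (\ref{inequal Delta}).

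The main obstacle is this last step: pinning down precisely, from the construction in \cite{lonzhu2002}, that $\chi_k(a)=0$ means $m_k\hat i(x_k,1)$ approaches $N$ from below with an error controlled by $\epsilon$. One also has to check that the smallness of $\delta$ and $\epsilon$ can be fixed in advance, uniformly in $r\le n$, without conflicting with the choices already made in Theorem \ref{thm:index jump 1}.
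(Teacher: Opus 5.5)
Your proposal is correct and follows essentially the same route as the paper: rational angles (and $\theta=\pi$) drop out by (\ref{3.40}); each irrational $R(\theta_l)$ and each irrational nontrivial $N_2$ block contributes at most one, since $\{m\alpha/\pi\}$ and $\{m(2\pi-\alpha)/\pi\}$ cannot both lie in $(0,\delta)$; and for the sharper bound, the choice $m_{j(s)}=[N/(M\hat{i}(x_{j(s)},1))]M$ when $\chi_{j(s)}(a)=0$ puts $\{m_{j(s)}\hat{i}(x_{j(s)},1)\}$ near $1$, forcing some irrational rotation angle out of $(0,\delta)$. Your version is slightly more careful than the paper's, since you explicitly handle the $\theta=\pi$ contributions, the strict positivity of $\varepsilon$ (from the irrationality of $m_{j(s)}\hat{i}(x_{j(s)},1)$), and the requirement $\delta<1/2$.
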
%\label{thm delta1}
\begin{proof}
From the properties of the splitting number in Section 2.2, for variable $\Delta_{j(s)}$, we have
\bea \nonumber
\Delta_{j(s)}&=&\sum_{\{m_{j(s)}\frac{\theta}{\pi}\}\in(0,\delta)}S_{M_{j(s)}}^{-}(e^{\sqrt{-1}\theta})\nonumber\\
&=&\sum_{\substack{l=1\\ \{m_{j(s)}\frac{\theta_{l}}{\pi}\}\in(0,\delta)}}^{r}S_{M_{j(s)}}^{-}(e^{\sqrt{-1}\theta_{l}})
+\sum_{\substack{l=1,\,\theta=\alpha_{l}\,\text{or}\, 2\pi-\alpha_{l},\\ \{m_{j(s)}\frac{\theta}{\pi}\}\in(0,\delta)}}^{r_{*}}S_{M_{j(s)}}^{-}(e^{\sqrt{-1}\theta}).\label{est sppliting2}
\eea
First, from (\ref{3.40}) in Theorem \ref{thm:index jump 1}, we have the trivial estimation
\bea
\sum_{\substack{l=1\\ \{m_{j(s)}\frac{\theta_{l}}{\pi}\}\in(0,\delta)}}^{r}S_{M_{j(s)}}^{-}(e^{\sqrt{-1}\theta_{l}})\leq r-\tilde{r}.\label{est sppliting4}
\eea
Moreover, one can see that if $\{m_{j(s)}\frac{\alpha_{l}}{\pi}\}\in(0,\delta)$, then $\{m_{j(s)}\frac{2\pi-\alpha_{l}}{\pi}\}=\{1-m_{j(s)}\frac{\alpha_{l}}{\pi}\}\in(1-\delta,1)$.
Similar, if $\{m_{j(s)}\frac{2\pi-\alpha_{l}}{\pi}\}\in(0,\delta)$, then $\{m_{j(s)}\frac{\alpha_{l}}{\pi}\}\in(1-\delta,1)$. Take $\delta$ small enough, only one of $\{m_{j(s)}\frac{\alpha_{l}}{\pi}\}$, $\{m_{j(s)}\frac{2\pi-\alpha_{l}}{\pi}\}$ belong to $(0,\delta)$, hence we have estimation
\bea
\sum_{\substack{l=1,\,\theta=\alpha_{l}\,\text{or}\, 2\pi-\alpha_{l},\\ \{m_{j(s)}\frac{\theta}{\pi}\}\in(0,\delta)}}^{r_{*}}S_{M_{j(s)}}^{-}(e^{\sqrt{-1}\theta})
\leq r_{*}-\tilde{r}_{*}.\label{est sppliting3}
\eea
Then inequalities (\ref{est sppliting4}), (\ref{est sppliting3}) and formula (\ref{est sppliting2}) implies (\ref{inequal Delta1}).

Now, assume that $\chi_{j(s)}(a)=0$ and $\hat{i}(x_{j(s)},1)\in\mathbb{R}\setminus\mathbb{Q}$, then (\ref{formula:mean index}) implies that
at least one of $\frac{\theta_{1}}{\pi}, \frac{\theta_{2}}{\pi},\ldots,\frac{\theta_{r}}{\pi}$
is irrational number, hence $r-\tilde{r}\geq1$ and
\bea
\left\{m_{j(s)}D_{j(s)}\right\}&=&
\left\{m_{j(s)}\left(i(x_{j(s)},1)+p_{-}+p_{0}-r+\sum_{l=1}^{r}\frac{\theta_{l}}{\pi}\right)\right\}\\ \nonumber
&=&\left\{m_{j(s)}\sum_{l=1,\, \frac{\theta_{l}}{\pi}\in \mathbb{R}\setminus\mathbb{Q}}^{r}\frac{\theta_{j}}{\pi}\right\}\\
&\leq&\sum_{l=1,\,\frac{\theta_{l}}{\pi}\in \mathbb{R}\setminus\mathbb{Q}}^{r}\left\{m_{j(s)}\frac{\theta_{j}}{\pi}\right\}, \label{4.7}
\eea
where $m_{j(s)}=([\frac{N}{MD_{j(s)}}]+\chi_{j(s)}(a))M=[\frac{N}{MD_{j(s)}}]M$. The second equality comes from
(\ref{3.40}) in Theorem 2.5.

On the other hand,
\bea
\left\{m_{j(s)}D_{j(s)}\right\}&=&
\left\{\left[\frac{N}{MD_{j(s)}}\right]MD_{j(s)}\right\}\nonumber\\
&=&\left\{N-\left\{\frac{N}{MD_{j(s)}}\right\}MD_{j(s)}\right\}, \label{4.8}
\eea
and from (\ref{3.35}), (\ref{3.36}) and $\chi_{j(s)}(a)=0$, we get that $\{\frac{N}{MD_{j(s)}}\}=
\left|\{\frac{N}{MD_{j(s)}}\}-\chi_{j(s)}(a)\right|<\epsilon$ for any given
$\epsilon$ small enough, take $0<\epsilon<\frac{1-r\delta}{MD_{j(s)}}$ with $\delta$ small enough,
where $\delta$ is given by (\ref{delta}) in Theorem \ref{thm:index jump 1}, then from (\ref{4.8}), we obtain
$\{m_{j(s)}D_{j(s)}\}>r\delta,$
this combines with (\ref{4.7}), we have
\bea
r\delta
<\sum_{l=1,\,\frac{\theta_{l}}{\pi}\in \mathbb{R}\setminus\mathbb{Q}}^{r}\left\{m_{j(s)}\frac{\theta_{j}}{\pi}\right\},
\eea
hence at least one of the elements in $\{\frac{\theta_{j}}{\pi}|\frac{\theta_{j}}{\pi}\in \mathbb{R}\setminus\mathbb{Q}, j=1,\ldots,r\}$
satisfies $\{m_{j(s)}\frac{\theta_{j}}{\pi}\}\not\in(0,\delta)$. This combine with the properties of splitting number in Section \ref{sec2.2}, we have
\bea
\sum_{l=1,\{m_{j(s)}\frac{\theta_{l}}{\pi}\}\in(0,\delta)}^{r}S_{M_{j(s)}}^{-}(e^{\sqrt{-1}\theta_{l}})\leq r-\tilde{r}-1,\label{est sppliting1}
\eea
Combine with (\ref{est sppliting1}), (\ref{est sppliting2}), (\ref{est sppliting4}) and (\ref{est sppliting3}), we the following estimation, this completes the proof,
\bes
\Delta_{j(s)}\leq r-\tilde{r}-1+r_{*}-\tilde{r}_{*}.
\ees
\end{proof}
Based on Theorem \ref{thm delta1} and inequalities \ref{formula:2s more} in Lemma \ref{lem:Hu-Ou 1}, we have following estimation.
\begin{cor}\label{cor 2s}
For any $s \in \{1, \dots, \varrho_n(\Sigma)\}$, we have
			\bea
			2s\geq p_{-}+q_{+}+2r_{*}+2(r_{0}-\tilde{r}_{0})+k+1. \label{inequal s1}
			\eea
Moreover, if $\chi_{j(s)}(a) = 0$ and $\hat{i}(x_{j(s)}, 1) \in \mathbb{R}\setminus\mathbb{Q}$, we further have
\bea
			2s\geq p_{-}+q_{+}+2r_{*}+2(r_{0}-\tilde{r}_{0})+k+3. \label{inequal s2}
			\eea
\end{cor}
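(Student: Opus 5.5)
The plan is to substitute explicit normal-form expressions into inequality (\ref{formula:2s more}) of Lemma \ref{lem:Hu-Ou 1} and then bound $\Delta_{j(s)}$ using Theorem \ref{thm delta1}. Throughout, write $M=M_{j(s)}=\gamma_{x_{j(s)}}(\tau_{j(s)})$ in the normal form (\ref{formula:decomposition of matrices}), with integers $p_-,p_0,p_+,q_-,q_0,q_+,r,r_*,r_0,k$ and rational counts $\tilde r,\tilde r_*,\tilde r_0$. Every quantity on the right of (\ref{formula:2s more}) should then be a linear combination of these integers.

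\textbf{Step 1: dimension, splitting numbers and $C(M)$.} Counting dimensions block by block gives
\[
n=p_-+p_0+p_++q_-+q_0+q_++r+2r_*+2r_0+k,
\]
since $N_2$ blocks are $4\times4$ and $M_k$ is $2k\times 2k$. From (\ref{formula:S_M^+}) and (\ref{formula:C(M)}) we have $S_M^+(1)=p_-+p_0$ and $C(M)=q_0+q_++r+2r_*$.

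\textbf{Step 2: the nullity $\nu(x_{j(s)},2m_{j(s)})$.} I would use the iteration formula (\ref{formula:iteration nullity}) with $m=2m_{j(s)}$, which is even, so $\frac{1+(-1)^m}{2}=1$. The key input is (\ref{3.40}) of Theorem \ref{thm:index jump 1}. For every rational angle $\theta$ it gives $m_{j(s)}\theta/\pi\in\mathbb{N}$, so $\varphi(2m_{j(s)}\theta/2\pi)=0$. For every irrational angle $\varphi=1$. Together with $\nu(x,1)=p_-+2p_0+p_+$ this yields
\[
\nu(x_{j(s)},2m_{j(s)})=p_-+2p_0+p_++q_-+2q_0+q_++2(\tilde r+\tilde r_*+\tilde r_0).
\]

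\textbf{Step 3: substitute and collect terms.} Putting Steps 1 and 2 into (\ref{formula:2s more}), the terms $p_0,p_+,q_-,q_0$ cancel, leaving
\[
2s\ \ge\ p_-+q_++2r+4r_*+2r_0+k+1-2(\tilde r+\tilde r_*+\tilde r_0)-2\Delta_{j(s)}.
\]
Inserting (\ref{inequal Delta1}), $\Delta_{j(s)}\le r-\tilde r+r_*-\tilde r_*$, removes $2(r-\tilde r)$ and $2(r_*-\tilde r_*)$, which gives (\ref{inequal s1}). Under the extra hypotheses $\chi_{j(s)}(a)=0$ and $\hat i(x_{j(s)},1)\notin\mathbb{Q}$, I would use (\ref{inequal Delta}) instead. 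It improves the bound on $\Delta_{j(s)}$ by one, so $-2\Delta_{j(s)}$ contributes an extra $+2$, giving (\ref{inequal s2}).

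\textbf{Main obstacle.} The only delicate point is Step 2. One must check that the nullity formula applies with the rational/irrational split exactly as (\ref{3.40}) guarantees, for $R(\theta)$, $N_2(\omega,u)$ and $N_2(\lambda,v)$ alike. One must also check that the $N_1(\pm1,\cdot)$, $-I$ and $M_k$ blocks contribute precisely $q_-+2q_0+q_+$ and nothing else at even iterates. Once the nullity is written down correctly, the remaining work is bookkeeping.
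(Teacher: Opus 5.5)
Your proposal is correct and matches the paper's proof: the paper also uses (\ref{3.40}) to write the nullity at the even iterate $2m_{j(s)}$ as $p_-+2p_0+p_++q_-+2q_0+q_++2(\tilde r+\tilde r_*+\tilde r_0)$, then substitutes it, together with $S_M^+(1)$, $C(M)$ and the two bounds of Theorem \ref{thm delta1}, into (\ref{formula:2s more}). The only difference is that you state explicitly the dimension identity $n=p_-+p_0+p_++q_-+q_0+q_++r+2r_*+2r_0+k$ and the cancellation of terms, both of which the paper uses without writing them down.
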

\begin{proof}
Based on (\ref{3.40}) in Theorem \ref{thm:index jump 1}, we can rewrite formula (\ref{formula:iteration nullity}) by
\bes
\nu(\gamma_{j(s)},2m_{j(s)})
=p_{-}+2p_{0}+p_{+}+q_{-}+2q_{0}+q_{+}+2(\tilde{r}+\tilde{r}_{*}+\tilde{r}_{0}).
\ees
This combine with formula (\ref{formula:S_M^+}), (\ref{formula:C(M)}) and inequality (\ref{inequal Delta1}), (\ref{inequal Delta}), we obtain (\ref{inequal s1}), (\ref{inequal s2}).
\end{proof}
Using the above inequalities, we obtain the following important result.
\begin{thm}
	   The closed characteristics $x_{j(1)}, x_{j(2)}$ has following properties:
\vskip 0.2 cm
(1) $x_{j(1)}$ is an elliptic closed characteristic and
\begin{equation}
		\begin{aligned}
			\gamma_{j(1)}(\tau) &\simeq N_1(1,1)\diamond I_{2p_0} \diamond N_1(1,-1)^{\diamond p_+} \diamond N_1(-1,1)^{\diamond q_-} \diamond -I_{2q_0}\\
			&\ \ \diamond R(\theta_1) \diamond \cdots \diamond R(\theta_r)
			\diamond N_2(\lambda_1, v_1) \diamond \cdots \diamond N_2(\lambda_{r_0}, v_{r_0}),\label{normal form s=1,2}
		\end{aligned}
	\end{equation}
with $r_{0}=\tilde{r}_{0}$ (i.e.all $N_{2}(\lambda_{1},\nu_{1}),\ldots,N_{2}(\lambda_{r_{0}},\nu_{r_{0}})$ are rational normal form). Moreover, when $\Sigma$ is non-degenerate, then $x_{j(1)}$ is irrationally elliptic.
\vskip 0.2 cm
(2) \, $\hat{i}(x_{j(1)}, 1) \in \mathbb{R}\setminus\mathbb{Q} \Leftrightarrow \chi_{j(1)}(a) = 1$.
\vskip 0.2 cm
(3) If\, $\hat{i}(x_{j(2)}, 1) \in \mathbb{R}\setminus\mathbb{Q}$ and $\chi_{j(2)}(a) = 0$, then $x_{j(2)}$ is an elliptic closed characteristic and it admits a symplectic normal form of similar type (\ref{normal form s=1,2}). When $\Sigma$ is non-degenerate, then $x_{j(2)}$ is irrationally elliptic.
		\label{lem:Hu-Ou 3}
\end{thm}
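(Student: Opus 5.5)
The plan is to feed $s=1$ and $s=2$ into Corollary \ref{cor 2s} and compare with the structure of $\gamma_{j(s)}(\tau)$. By \cite{lonzhu2002}, $\gamma_{j(s)}(\tau)\simeq N_1(1,1)\diamond M$, so the decomposition (\ref{formula:decomposition of matrices}) always has $p_-\geq 1$. Ellipticity then amounts to $q_+=0$, $r_*=0$ and $k=0$. I will also show there is exactly one $N_1(1,1)$ block and that every trivial $N_2$ block is rational.

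\textbf{Part (1).} For $s=1$, inequality (\ref{inequal s1}) gives
\[
2\geq p_-+q_++2r_*+2(r_0-\tilde r_0)+k+1 .
\]
Since $p_-\geq1$, this forces $p_-=1$, $q_+=0$, $r_*=0$, $r_0=\tilde r_0$ and $k=0$. Removing $D(\pm2)$ blocks (because $k=0$), $N_1(-1,-1)$ blocks and nontrivial $N_2$ blocks from the normal form leaves exactly (\ref{normal form s=1,2}). Every remaining block has its eigenvalues on $\mathbb{U}$, so $e(\gamma_{j(1)}(\tau))=2n$ and $x_{j(1)}$ is elliptic.

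If $\Sigma$ is non-degenerate, no iterate has eigenvalue $1$ beyond the forced double one. This rules out $I_{2p_0}$ and $N_1(1,-1)$ blocks. It also rules out $-I$, $N_1(-1,1)$, rational $R(\theta)$ and rational $N_2$ blocks, since each would give eigenvalue $1$ for some iterate. Hence $p_0=p_+=q_-=q_0=0$, $\tilde r=0$ and $r_0=\tilde r_0=0$. The form reduces to $N_1(1,1)\diamond R(\theta_1)\diamond\cdots\diamond R(\theta_{n-1})$ with every $\theta_i/\pi$ irrational, which is irrationally elliptic.

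\textbf{Part (2).} For the direction $\Rightarrow$, suppose $\hat i(x_{j(1)},1)\notin\mathbb{Q}$ and $\chi_{j(1)}(a)=0$. Then (\ref{inequal s2}) gives $2\geq p_-+\cdots+3\geq 4$, a contradiction, so $\chi_{j(1)}(a)=1$. For $\Leftarrow$, I will invoke Lemma \ref{lem:Hu-Ou 2}(i) in contrapositive form. A rational $D_{j(s)}=\hat i(x_{j(s)},1)$ forces $\chi_{j(s)}(a)=0$ by the remark after Theorem \ref{thm:index jump 2}, namely $\psi(a_k)=0$ when $v_k$ is rational. So $\chi_{j(1)}(a)=1$ implies $\hat i(x_{j(1)},1)$ is irrational.

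\textbf{Part (3).} For $s=2$ under the stated hypotheses, (\ref{inequal s2}) gives
\[
4\geq p_-+q_++2r_*+2(r_0-\tilde r_0)+k+3 .
\]
With $p_-\geq1$, the right side is already at least $4$, so all remaining terms vanish exactly as in Part (1). The same argument then yields the normal form (\ref{normal form s=1,2}), ellipticity, and irrational ellipticity in the non-degenerate case.

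The main obstacle is justifying $p_-\geq1$, i.e.\ that the $N_1(1,1)$ from \cite{lonzhu2002} is really counted in $p_-$ and not absorbed elsewhere. A second point is the non-degenerate reduction, where I must check that a rational $R(\theta)$ or a $-I$ block does produce eigenvalue $1$ at some iterate, contradicting non-degeneracy of all iterations. A third is that I am assuming $D_{j(s)}$ denotes the mean index $\hat i(x_{j(s)},1)$, consistent with how it is used in Theorem \ref{thm delta1}.
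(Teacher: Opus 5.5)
Your proposal is correct and is essentially the paper's own argument. For part (1) you use $s=1$ in (\ref{inequal s1}) with $p_-\geq 1$; for part (2) you use (\ref{inequal s2}) with $s=1$, which gives $2\geq 4$, together with the fact that a rational mean index forces $\chi_{j(1)}(a)=0$; for part (3) you use (\ref{inequal s2}) with $s=2$. Citing the remark after Theorem \ref{thm:index jump 2} (rational $v_k$ forces $a_k=0$, hence $\psi(a_k)=0$) is in fact the cleaner justification for $\chi_{j(1)}(a)=1\Rightarrow \hat{i}(x_{j(1)},1)\notin\mathbb{Q}$, since Lemma \ref{lem:Hu-Ou 2}(i) as stated only covers an index $s_2>s_1\geq 1$ and so does not apply to $j(1)$.
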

\begin{proof}
For $(1)$, from the fact $p_{-}\geq1$ and inequality (\ref{inequal s1}) in Corollary \ref{cor 2s}, when $s=1$, we have
$$
2\geq p_{-}+q_{+}+2r_{*}+2(r_{0}-\tilde{r}_{0})+k+1\geq2+q_{+}+2r_{*}+2(r_{0}-\tilde{r}_{0})+k.
$$
hence $p_{-}=1, q_{+}=r_{*}=k=0$ and $r_{0}=\tilde{r}_{0}$, from the symplectic decomposition (\ref{formula:decomposition of matrices}), we get (\ref{normal form s=1,2}).
Moreover, when the compact convex surface $\Sigma$ is non-degenerate, we further have $p_0=p_+=q_-=q_0=\tilde{r}=\tilde{r}_0=0$, hence $x_{j(1)}$ is irrationally elliptic.

For $(2)$, if $\hat{i}(x_{j(1)}, 1) \in \mathbb{Q}$ then from $i)$ in Lemma \ref{lem:Hu-Ou 1}, we must have $\chi_{j(1)}(a) = 0$, hence  $\chi_{j(1)}(a) = 1$ implies
$\hat{i}(x_{j(1)}, 1) \in \mathbb{R}\setminus\mathbb{Q}$. Moreover, if\, $\hat{i}(x_{j(1)}, 1) \in \mathbb{R}\setminus\mathbb{Q}$ and $\chi_{j(1)}(a) = 0$, using inequality (\ref{inequal s2}) in Corollary \ref{cor 2s}, when $s=1$, we have
$$
2\geq p_{-}+q_{+}+2r_{*}+2(r_{0}-\tilde{r}_{0})+k+3\geq4+q_{+}+2r_{*}+2(r_{0}-\tilde{r}_{0})+k,
$$
which implies $2\geq 4$, contradiction. Therefore, $\hat{i}(x_{j(1)}, 1) \in \mathbb{R}\setminus\mathbb{Q}$ must implies $\chi_{j(1)}(a) = 1$.

For $(3)$, if $\chi_{j(2)}(a) = 0$ and $\hat{i}(x_{j(2)}, 1) \in \mathbb{R}\setminus\mathbb{Q}$, using inequality (\ref{inequal s2}) in Corollary \ref{cor 2s}, when $s=2$, we have
$$
4\geq p_{-}+q_{+}+2r_{*}+2(r_{0}-\tilde{r}_{0})+k+3\geq4+q_{+}+2r_{*}+2(r_{0}-\tilde{r}_{0})+k.
$$
hence $p_{-}=1, q_{+}=r_{*}=k=0$ and $r_{0}=\tilde{r}_{0}$, from the symplectic decomposition (\ref{formula:decomposition of matrices}), then $x_{j(2)}$ have a symplectic normal form of type (\ref{normal form s=1,2}). Similar, when the compact convex surface $\Sigma$ is non-degenerate, we further have $p_0=p_+=q_-=q_0=\tilde{r}=\tilde{r}_0=0$, hence $x_{j(2)}$ is irrationally elliptic.
\end{proof}
Based on the above Theorem \ref{lem:Hu-Ou 3}, we can easy to prove our main Theorem \ref{thm:2-irr-elliptic}.
\begin{proof}[\textbf{Proof of Theorem \ref{thm:2-irr-elliptic}.}]
For any fix $a\in A(v)$ and $s\in\{1,\ldots,\varrho_{n}(\Sigma)\}$, consider the inject map
$p(N-s+1)=([(\tau_{j(s)},x_{j(s)})], 2m_{j(s)})$ given by Theorem \ref{thm:index jump 3}. Now, from Lemma \ref{lem choose a 2}, we can choose an $\hat{a}\in A(v)$ such that $\chi_{j(1)}(\hat{a})=1$. Then Theorem \ref{lem:Hu-Ou 3}-$(1)$ implies $x_{j(1)}$ is an elliptic closed characteristic and
\begin{equation}
		\begin{aligned}
			\gamma_{j(1)}(\tau) &\simeq N_1(1,1)\diamond I_{2p_0} \diamond N_1(1,-1)^{\diamond p_+} \diamond N_1(-1,1)^{\diamond q_-} \diamond -I_{2q_0}\\
			&\ \ \diamond R(\theta_1) \diamond \cdots \diamond R(\theta_r)
			\diamond N_2(\lambda_1, v_1) \diamond \cdots \diamond N_2(\lambda_{r_0}, v_{r_0}),\nonumber
		\end{aligned}
	\end{equation}
with all $N_{2}(\lambda_{1},\nu_{1}),\ldots,N_{2}(\lambda_{r_{0}},\nu_{r_{0}})$ are rational normal form. Moreover, it is easy to see that $x_{j(1)}$ is irrationally elliptic when $\Sigma$ is non-degenerate. Since $\chi_{j(1)}(\hat{a})=1$, from Theorem \ref{lem:Hu-Ou 3}-$(2)$, we know $\hat{i}(x_{j(1)},1)\in\mathbb{R}\setminus\mathbb{Q}$.
Now, choose $\tilde{a}=-\hat{a}\in A(v)$, using Theorem \ref{thm:index jump 3} again,
we still have $(\tilde{N},\tilde{m}_{1},\ldots,\tilde{m}_{q})$
, $\tilde{j}(s)$ and inject map
$p(\tilde{N}-s+1)=([(\tau_{\tilde{j}(s)},x_{\tilde{j}(s)})],2\tilde{m}_{\tilde{j}(s)}), s\in\{1,\ldots,\varrho_{n}(\Sigma)\}$. Similarly, by applying Theorem \ref{lem:Hu-Ou 3}-$(1)$ again, we obtain $x_{\tilde{j}(1)}$ has the same properties as $x_{j(1)}$. If $\tilde{j}(1)= j(1)$, from the definition of
$\chi(a)$, we know that $\chi_{j(1)}(\hat{a})=1$ implies $\chi_{j(1)}(-\hat{a})=0$, hence
$\chi_{\tilde{j}(1)}(\tilde{a})=\chi_{j(1)}(-\hat{a})=0$, but $\hat{i}(x_{\tilde{j}(1)},1)=\hat{i}(x_{j(1)},1)\in\mathbb{R}\setminus\mathbb{Q}$, this contradict with
Theorem \ref{lem:Hu-Ou 3}-$(2)$, hence this case will not happen, we always have $\tilde{j}(1)\neq j(1)$, then $x_{j(1)}, x_{\tilde{j}(1)}$ are two different closed characteristics
that possess the desired properties as required in Theorem \ref{thm:2-irr-elliptic}. This completes the proof.
\end{proof}
Moreover, under \textbf{Assumption A}, by a similar proof of Theorem \ref{thm:2-irr-elliptic}, we can easily obtain Theorem \ref{thm:3-irr-elliptic}.
\begin{proof} [\textbf{Proof of Theorem \ref{thm:3-irr-elliptic}.}]
From Lemma \ref{lem choose a 2}, we can choose an $\hat{a}\in A(v)$ such that $\chi_{j(1)}(\hat{a})=1$, Assumption A tells us that $x_{j(1)}$ is irrational elliptic , then similar the proof of Theorem \ref{thm:2-irr-elliptic}, take $a=-\hat{a}$, we have inject map $p(\tilde{N}-s+1)=([(\tau_{\tilde{j}(s)},x_{\tilde{j}(s)})],2\tilde{m}_{\tilde{j}(s)}), s\in\{1,\ldots,\varrho_{n}(\Sigma)\}$, then $x_{\tilde{j}(1)}$
must distinct from $x_{j(1)}$, use Assumption A again, $x_{\tilde{j}(1)}$ is another irrational elliptic closed characteristics.
\end{proof}

In the following, we will prove main Theorem \ref{thm:1-elliptic}, that is the existence of three elliptic closed characteristics on any $C^2$ compact convex hypersurfaces $\Sigma\subset \mathbb{R}^{6}$.
First, We need the following useful result, which is a corollary of Theorem A as proved by \c{C}ineli-Ginzburg-G\"{u}rel-Mazzucchellid \cite{CGGM2026}.
\begin{cor}(cf. Theorem A of \cite{CGGM2026})
For any $C^2$ compact convex hypersurface $\Sigma\subset \mathbb{R}^{2n}$, if $\Sigma$ has a hyperbolic closed characteristics, then $\Sigma$ has infinitely many geometrically distinct closed characteristics.\label{cor Ginzburgb}
\end{cor}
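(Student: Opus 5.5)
The plan is to deduce the statement directly from Theorem A of \cite{CGGM2026}, which I take in the following form: a \emph{dynamically convex} Reeb flow on the standard contact sphere $S^{2n-1}$ with only finitely many simple periodic orbits has no hyperbolic periodic orbit. Equivalently, the existence of one hyperbolic closed Reeb orbit forces infinitely many simple ones. The work is therefore a translation: place $\Sigma\in\mathcal{H}(2n)$ in the setting of that theorem and check that the notions used here coincide with those used there.

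\textbf{Step 1 (contact setup).} Since $\Sigma$ is strictly convex and contains the origin, it is star-shaped, so the restriction of the Liouville form $\lambda_0=\frac12\sum_i (p_i\,dq_i-q_i\,dp_i)$ is a contact form on $\Sigma\cong S^{2n-1}$. Its contact structure is the standard one. Its Reeb vector field is a positive multiple of $JN_\Sigma(x)$; using $N_\Sigma(x)\cdot x=1$, it is in fact proportional to $JH_\alpha'(x)$. Hence the prime closed Reeb orbits correspond bijectively to the elements of $\widetilde{\mathcal{J}}(\Sigma)$, and the reparametrization does not change geometric distinctness.

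\textbf{Step 2 (dynamical convexity).} By Hofer--Wysocki--Zehnder, every closed Reeb orbit of a convex hypersurface has Conley--Zehnder index at least $n+1$. In the Maslov-type language of this paper this is the fact recalled before Theorem \ref{thm:index jump 3}: $i(x,m)\geq n$ and $\hat{i}(x)>2$ for every iterate. I will use the standard relation between $i_1$ for $\gamma_x$ and the Conley--Zehnder index of the transverse linearized flow, which splits off the $N_1(1,1)$ block. With it, the flow on $\Sigma$ is dynamically convex in the sense required by \cite{CGGM2026}.

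\textbf{Step 3 (hyperbolicity).} Write $\gamma_x(\tau)\simeq N_1(1,1)\diamond M$. Here $N_1(1,1)$ accounts for the flow direction and the radial (energy) direction, and $M\in \mathrm{Sp}(2n-2)$ is the linearized Poincaré return map on the contact hyperplane $\xi$. By our definition, $x$ is hyperbolic when $e(\gamma_x(\tau))=2$ and $1$ is a double multiplier. This means exactly that $M$ has no eigenvalues on $\mathbb{U}$, which is the (transversal) hyperbolicity of the Reeb orbit used in \cite{CGGM2026}. Applying Theorem A contrapositively then gives $^{\#}\widetilde{\mathcal{J}}(\Sigma)=+\infty$.

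\textbf{Main obstacle: regularity.} Here $\Sigma$ is only $C^2$, so $\lambda_0|_\Sigma$ is $C^2$ and the Reeb field is only $C^1$, whereas the Floer/symplectic homology methods behind \cite{CGGM2026} are formulated for smooth forms. I would first check whether their argument only uses local Floer homology of isolated orbits together with filtered homology, which are stable under $C^2$-small perturbations. If not, I would approximate $\Sigma$ by smooth convex hypersurfaces $\Sigma_k$ that agree with $\Sigma$ to second order near the given hyperbolic orbit. This is possible because hyperbolic orbits are non-degenerate and hence persist under $C^1$-small perturbations of the Reeb field. Finiteness for $\Sigma$ would then have to be transferred to the approximations, at least in an action window. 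That transfer is delicate, so I would rather argue through the index-theoretic part of their proof: a hyperbolic orbit contributes to the symplectic homology of $S^{2n-1}$ in a way incompatible with the resonance/index-recurrence constraints for finitely many orbits. This route uses only indices and local homology, which are available in the $C^2$ setting.
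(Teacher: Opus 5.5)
Your Steps 1--3 are essentially the paper's argument: the paper also just identifies the characteristic flow on $\Sigma$ with a Reeb flow on the standard contact $S^{2n-1}$, notes that convexity gives the dynamical-convexity hypothesis, and invokes Theorem A of \cite{CGGM2026}; you spell out what the paper leaves implicit, notably that $e(\gamma_x(\tau))=2$ means exactly that the block $M$ in $\gamma_x(\tau)\simeq N_1(1,1)\diamond M$, i.e.\ the linearized return map, is hyperbolic. The paper does not address the $C^2$-regularity issue at all, but your proposed way out is not right as stated: the proof of Theorem A rests on Floer-theoretic inputs, in particular a crossing-energy bound for Floer cylinders passing near the hyperbolic orbit, and not only on indices and local homology, so this point stays open (in your write-up as in the paper) rather than settled.
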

\begin{rem}
The original Theorem A in \cite{CGGM2026} is more general, the authors consider the closed Reeb orbits on the $(2n-1)$-dimensional standard
contact sphere, under very mild dynamical convexity type assumptions, the presence of one hyperbolic closed orbit implies the existence of infinitely many simple closed Reeb
orbits. It well known that the Hamiltonian flow on the hypersurface $\Sigma$ is equivalent to the Reeb flow on a standard contact sphere. Moreover, when $\Sigma$ is convex, it satisfies
the dynamical convexity type assumptions in \cite{CGGM2026}, therefore the above result is just a simple corollary of Theorem A in \cite{CGGM2026}.
\end{rem}
Second, another key observation of ours is that when the surfaces is non-degenerate, then Theorem \ref{thm:index jump 3} tells us that $\varrho_{3}(\Sigma)=3$ and formulas $(\ref{formula:S_M^+})$ and $(\ref{formula:C(M)})$ becomes
$$
S_M^+(1) = p_-=1,\ \ C(M) = r + 2r_*.
$$
Moreover, Lemma \ref{lem:Hu-Ou 1} shows that for any $s \in \{1, \dots, \varrho_n(\Sigma)\}$, we have $\nu(x_{j(s)}, 2m_{j(s)})=1$, hence
the inequalities in Lemma \ref{lem:Hu-Ou 1} become equalities
\bea
2s&=&n + S_{M_{j(s)}}^+(1) + C(M_{j(s)}) - 2\Delta_{j(s)}\nonumber\\
&=&4 + r + 2r_* - 2\Delta_{j(s)}.\label{equla s}
\eea
Based on this equalities and Corollary \ref{cor Ginzburgb}, we can obtain three elliptic closed characteristics in the following.
\begin{proof}[\textbf{Proof of Theorem \ref{thm:1-elliptic}.}]
%Theorem \ref{thm:index jump 3} tells us that $\varrho_{3}(\Sigma)=3$, then from Lemma \ref{lem choose a 3}, we first choose an $\hat{a}\in A(v)$ such that $\chi_{j(3)}(\hat{a})=0$.
First, Theorem \ref{lem:Hu-Ou 3} implies $x_{j(1)}$ is irrationally elliptic. For $s=2$,  formula (\ref{equla s}) becomes $0=r + 2r_* - 2\Delta_{j(2)}$,
from Corollary \ref{cor Ginzburgb}, we know that $x_{j(2)}$ can not be hyperbolic. Since $\Sigma$ is non-degenerate, if $x_{j(2)}$ is not an elliptic closed characteristics,
from the symplectic decomposition (\ref{formula:decomposition of matrices}), the only possible case is
$$
\begin{aligned}
			\gamma_{j(2)}(\tau) &\simeq N_1(1,1)\diamond R(\theta_1)\diamond M_1,
		\end{aligned}
$$
then $r=1$, this contradict with $0=r + 2r_* - 2\Delta_{j(2)}$, hence $x_{j(2)}$ must be elliptic.
For $s=3$, formula (\ref{equla s}) becomes
$2=r + 2r_* - 2\Delta_{j(2)}\leq r + 2r_*$, hence $r=2$ or $r_{*}=1$, this implies $x_{j(3)}$ is elliptic. Therefore we have proved the existence of three elliptic closed characteristics.

Furthermore, Theorem \ref{thm:index jump 3} tells us that at least $\varrho_3(\Sigma)-1=2$ among $x_{j(1)}, x_{j(2)}$ and $x_{j(3)}$ have irrational mean index, hence
we obtain that one of $\hat{i}(x_{j(2)})$ and $\hat{i}(x_{j(3)})$ must be irrational. Without loss of generality, we assume that $\hat{i}(x_{j(2)})$ is irrational, since we have proved that $x_{j(2)}$ is elliptic, then from the mean index formula (\ref{formula:mean index}), the only possible case is
$$
\begin{aligned}
			\gamma_{j(2)}(\tau) &\simeq N_1(1,1)\diamond R(\theta_1)\diamond \hat{M},
		\end{aligned}
$$
where $\hat{M}=R(\theta_2), N(\pm1,a)$ with $a=\pm1, 0$. Since $\Sigma$ is non-degenerate, $\hat{M}$ must be $R(\theta_2)$ with $\frac{\theta_{2}}{\pi}\in\mathbb{R}\setminus\mathbb{Q}$, then $x_{j(2)}$  must be irrational elliptic. In summary, we obtain that $x_{j(1)}$ is irrational elliptic and either $x_{j(2)}$ or $x_{j(3)}$ must be irrational elliptic. This completes the proof.
\end{proof}
       %\endpf

     \textbf{Acknowledgments}. The authors would like to express the gratitude to Xijun Hu for helpful discussions, and particularly for his constructive guidance of the research direction.  This work is partially  supported by the Taishan Scholars Climbing Program of Shandong($\sharp$ TSPD20240802). Y.Ou is partially supported by NSFC ($\sharp$12371192), the Young Taishan Scholars Program of Shandong Province ($\sharp$ tsqn202312055), and the Qilu Young Scholar Program of Shandong University.

     %导入参考文献
     
	\end{document}